\newtheorem{theorem}{Theorem}[section]
\newtheorem{corollary}[theorem]{Corollary}
\newtheorem{lemma}[theorem]{Lemma}
\newtheorem{definition}[theorem]{Definition}
\newtheorem{example}[theorem]{Example}
\newtheorem{remark}[theorem]{Remark}
\newcommand{\RR}{\mathbb{R}}
\newcommand{\CC}{\mathbb{C}}
\newcommand{\NN}{\mathbb{N}}
\newcommand\keywords[1]{\textbf{Keywords}: #1}
\DeclareMathOperator{\spn}{span}
\DeclareMathOperator{\stab}{Stab}
\DeclareMathOperator{\fix}{Fix}
\DeclareMathOperator{\Tr}{Tr}
\date{}
\title{The Unisolvence of Lagrange Interpolation with Symmetric Interpolation Space and Nodes in High Dimension}
\author[1,2]{Yulin Xie}
\author[1,2]{Yifa Tang\thanks{Corresponding author: tyf@lsec.cc.ac.cn}}
\affil[1]{LSEC, ICMSEC, Academy of Mathematics and Systems Science, Chinese Academy of Sciences, Beijing 100190, China}
\affil[2]{School of Mathematical Sciences, University of Chinese Academy of Sciences, Beijing 100049, China}
\begin{document}
\maketitle

\begin{abstract}
    High-dimensional Lagrange interpolation plays a pivotal role in finite element methods, where ensuring the unisolvence and symmetry of its interpolation space and nodes set is crucial. In this paper, we leverage group action and group representation theories to precisely delineate the conditions for unisolvence. We establish a necessary condition for unisolvence: the symmetry of the interpolation nodes set is determined by the given interpolation space. Our findings not only contribute to a deeper theoretical understanding but also promise practical benefits by reducing the computational overhead associated with identifying appropriate interpolation nodes.
\end{abstract}

\keywords{Lagrange interpolation, Unisolvence, Symmetry, Group action, Group representation, High dimension.}
\maketitle

\section{Introduction}
Given a finite-dimensional linear space consisting of functions and a set of points, Lagrange interpolation involves constructing a function $f$ called an interpolation function contained in the space for an arbitrary function $p$, such that $f$ coincides with $p$ at these points. If such $f$ exists and is unique, we say the interpolation problem is \emph{unisolvent}. The linear space is referred to as the interpolation space, and the points are termed nodes.

We review some standard facts on interpolation using univariate polynomials. When the interpolation space consists of polynomials of degree less than or equal to $n$, and the nodes are $n+1$ distinct points, this interpolation is unisolvent, and the interpolation function can be obtained using the Lagrange formula~\cite{mastroianni2008interpolation}. However, if the number of variables exceeds 1, merely having distinct points cannot guarantee unisolvence. In fact, if the interpolation space consists of continuous functions, then there exist distinct points in $\RR^n$ (for $n \geq 2$) such that interpolation is non-unisolvent~\cite{gasca2000polynomial}. Chung and Yao proposed a geometric characterization condition that is sufficient for unisolvence and provided corresponding Lagrange base functions~\cite{chung1977lattices}.

In finite element methods, Lagrange interpolation on an $n$-dimensional simplex is common, where interpolation function space and nodes are often symmetric with respect to barycentric coordinates~\cite{cui2017high}. Here a function space $F$ is symmetric means that if $f(x_1,x_2,\ldots,x_n)\in F$, then $f(x_{\sigma(1)},x_{\sigma(2)},\ldots,x_{\sigma(n)})\in F$ for each permutation $\sigma$ of $\{1,2,\ldots,n\}$. For a special family of polynomials on an $n$-dimensional simplex, corresponding nodes are constructed, termed regular nodes, which ensure interpolation problem is unisolvent~\cite{nicolaides1972class}. To achieve higher accuracy in numerical calculations, nodes must be adjusted while maintaining unisolvence. When $n=2,3,4$, a necessary condition for unisolvence is that the symmetry of nodes matches that of regular nodes~\cite{marchildon2022unisolvency}, and this condition holds for $n>4$ as well.~\cite{mulder2023unisolvence}

This paper proves a similar conclusion in more general situations. Specifically, if (1) the interpolation space and nodes are symmetric with coordinates(The specific definition will be provided in Section~\ref{section problem arising}); (2) the Lagrange interpolation is unisolvent, then the symmetry of nodes set is determined uniquely by the interpolation space. The conclusions prove the conjectures of ~\cite{mulder2023unisolvence} as corollaries. It should be pointed out that this conclusion holds not only for polynomial interpolation, but also for general interpolation problem. 

This article will be structured as follows: Section~\ref{section preliminaries} introduces fundamental concepts and theorems of group action and group representation theory essential for our analysis. In Section~\ref{section problem arising}, we reformulate our interpolation problem using the framework of group action, presenting a necessary and sufficient condition for its unisolvence. In Section~\ref{section equal number of orbits}, we establish a necessary and sufficient condition for the equivalence of the subset of $\RR^n$ under an action of $S_n$. We provide a necessary condition for unisolvence, thereby reducing the proof to demonstrating the invertibility of the matrix $V$. The Section~\ref{section V is positive definite} unveils a pivotal induced representation and demonstrates that $V$ is a Gram matrix. And we prove the conclusion through the linear independence of the characters. Finally, in Section~\ref{section conclusion}, we offer proofs for the two main theorems presented in this article, alongside prospects for future research endeavors.

\section{Preliminaries}\label{section preliminaries}

For better illustration, we recall some basic concepts on the action of groups and group representation theory. All these concepts and conclusions are standard and can be found in any group theory textbooks, e.g. ~\cite{rotman2012introduction}, ~\cite{serre1977linear}.

\subsection{A group acts on a set}

\begin{definition}\label{def group action}
    Let $G$ be a group and $X$ be a set. If a mapping $\phi : G \times X \to X$ satisfies the following conditions:

    \begin{enumerate}
        \item $\phi(g_1 g_2, x) = \phi(g_1, \phi(g_2, x))$ for all $g_1, g_2 \in G$ and $x \in X$,
    
        \item $\phi(e, x) = x$ for all $x \in X$, where $e$ is the unit element of $G$,
    \end{enumerate}

    then we call $\phi$ an action of the group $G$ on $X$.
\end{definition}

\begin{remark}
    Without causing ambiguity, we will omit $\phi$ and denote $\phi(g,x)$ briefly by $gx$. Using this notation, conditions 1 and 2 in Definition~\ref{def group action} become:
    
    \begin{enumerate}
        \item $(g_1 g_2)(x) = g_1(g_2x)$ for all $g_1, g_2 \in G$ and $x \in X$,
    
        \item $ex = x$ for all $x \in X$,
    \end{enumerate}

    In the sense, each element in $G$ can be regarded as a permutation of $X$, while the multiplication of $G$ is the composition of mappings, $e$ is the identity mapping, and the inverse mapping is the inverse in group $G$.

\end{remark}

\begin{definition}
    Let group $G$ act on sets $X$ and $Y$ respectively. We say the two actions are equivalent provided there exists a bijection $\psi$ from $X$ to $Y$, such that $g(\psi(x))=\psi(g(x))$ holds for each $g\in G$ and $x\in X$.
\end{definition}
\begin{remark}
    The equivalence defined above is indeed an equivalence relation among all sets acted on by group $G$.
\end{remark}

\begin{definition}
    If group $G$ acts on $X$ and $x\in X$, then the orbit of $x$ is $O(x):=\{gx:g\in G\}\subset X$.

    We often denote the orbit $O(x)$ by $Gx$. The orbits of $X$ form a partition. In fact, if we define $x\sim y$ by $y=gx$ for some $g\in G$, it is a equivalent relation in $X$ whose equivalent classes are the orbits.
\end{definition}

\begin{definition}
    If group $G$ acts on $X$, for each $x\in X$ the stabilizer of $x$ is defined by $\{g\in G:gx=x\}\subset G$. It is a subgroup of $G$ and we will use $\stab(x)$ to denote it. For each $g\in G$, the set $\{x\in X:gx=x\}\subset X$ is called the fixed points set of $g$, denoted by $\fix(g)$.
\end{definition}

\begin{lemma}[Burnside's Lemma]\label{theorem Burnside}
    For a finite group $G$ that acts on a finite set $X$, let $t$ be the number of orbits. Then, the Burnside's Lemma states that
    \[
    t=\frac{1}{|G|}\sum_{g\in G}|\fix(g)|
    \]
    The number of orbits is equal to the average of number of fixed points of elements of $G$.
\end{lemma}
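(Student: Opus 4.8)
The plan is to prove the identity by a double-counting argument applied to the incidence set
\[
S := \{(g,x) \in G \times X : gx = x\}.
\]
I would compute $|S|$ in two different ways and then convert the result into a count of orbits using the relationship between stabilizers and orbit sizes. The whole argument is elementary combinatorics once the correct set is introduced; the only substantive group-theoretic input is the orbit--stabilizer relation.

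First I would count $S$ by summing over group elements: for a fixed $g \in G$, the pairs $(g,x) \in S$ are exactly those with $x \in \fix(g)$, so $|S| = \sum_{g \in G} |\fix(g)|$, which is precisely the quantity appearing on the right-hand side of the claim. Next I would count $S$ by summing over points of $X$: for a fixed $x \in X$, the pairs $(g,x) \in S$ are exactly those with $g \in \stab(x)$, giving $|S| = \sum_{x \in X} |\stab(x)|$. Equating the two expressions yields
\[
\sum_{g \in G} |\fix(g)| = \sum_{x \in X} |\stab(x)|.
\]

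The remaining step is to evaluate the right-hand sum. Here I would invoke the orbit--stabilizer relation $|\stab(x)| \cdot |O(x)| = |G|$, so that $|\stab(x)| = |G|/|O(x)|$ and $\sum_{x \in X} |\stab(x)| = |G| \sum_{x \in X} 1/|O(x)|$. Since the orbits partition $X$ and every point of a fixed orbit $O$ has $|O(x)| = |O|$, the contribution of that orbit is $\sum_{x \in O} 1/|O| = 1$; summing over the $t$ distinct orbits produces exactly $t$. Hence $\sum_{x \in X} |\stab(x)| = t\,|G|$, and combining this with the first count gives $t = \frac{1}{|G|} \sum_{g \in G} |\fix(g)|$, as desired.

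The only genuine ingredient beyond this bookkeeping is the orbit--stabilizer relation. Since the excerpt introduces orbits and stabilizers but does not state that relation explicitly, I expect the main (and quite minor) obstacle to be establishing it carefully: namely, verifying that the map $g\,\stab(x) \mapsto gx$ is a well-defined bijection from the left cosets of $\stab(x)$ onto the orbit $O(x)$, which yields $|O(x)| = |G|/|\stab(x)|$ and thereby legitimizes the regrouping of the sum over the partition of $X$ into orbits.
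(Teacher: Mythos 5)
Your proof is correct and complete: the double count of the incidence set $S=\{(g,x): gx=x\}$, combined with the orbit--stabilizer relation $|\stab(x)|\,|O(x)|=|G|$ to collapse $\sum_{x\in X}|\stab(x)|$ into $t\,|G|$, is exactly the standard argument for Burnside's Lemma. The paper itself gives no proof---it states the lemma as a standard fact and defers to the textbooks it cites---and your argument is precisely the proof those references give, including the careful point that the map $g\,\stab(x)\mapsto gx$ is a well-defined bijection from left cosets onto the orbit.
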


\subsection{Group representation}
In this article, we will only consider the representation of groups in $\CC$.

\begin{definition}
    Let $V$ be a vector space over the filed $\CC$ of complex numbers and $GL(V)$ be the set of invertible linear mappings which becomes a group under the composite of mappings. 
    
    Let $G$ be a group. A group homomorphism $\phi:G\rightarrow GL(V)$ is called a linear representation of the group $G$, and $V$ is called the representation space of $\phi$.
\end{definition}

\begin{definition}
    Let $G$ acts on set $X$ and $V=\spn_\CC\{e_x\}_{x\in X}$ be a vector space indexed by the elements of $X$. For $g\in G$, let $\phi(g)$ be the linear mapping from $V$ to itself which sends $e_x$ to $e_{gx}$. In this way we obtain what we shall call the permutation representation.
\end{definition}

\begin{definition}
    Let $\phi:G\to GL(V)$ be a linear representation of a finite group $G$. For each $g\in G$, put $\chi_\phi(g):=\Tr(\phi(g))$ which is the trace of $\phi(g)$. The complex valued function $\chi_\phi$ on $G$ is called the character of the representation $\phi$. 
\end{definition}
\begin{remark}\label{remark character is class function}
    According to the property of trace $\Tr(AB)=\Tr(BA)$, we have that $\chi_\phi(ghg^{-1})=\chi_\phi(h)$ holds for $g,h\in G$. 
\end{remark}

\begin{definition}
    Let $G$ be a finite group. We call a complex valued function $f:G\rightarrow \CC$ on $G$ a class function if
    \[
    f(ghg^{-1})=f(h),\forall g,h\in G
    \]
    In other words, a class function takes a constant value on each conjugate class of group G. Let $\rm\bf H$ be the vector space of all class functions. If we define a scalar product $\langle\cdot,\cdot\rangle$ on $\rm\bf H$ by
    \[
    \langle f_1,f_2\rangle:=\frac{1}{|G|}\sum_{g\in G} f_1(g)\overline{f_2(g)}
    \]
    then $\rm\bf H$ becomes a Hilbert space.
\end{definition}
\begin{remark}
    Each character function is a class function by Remark~\ref{remark character is class function}. 
\end{remark}

\begin{definition}\label{def induce representation}
    Let $\phi:G\to GL(V)$ be a linear representation of group $G$, $H$ be a subgroup of $G$ and $W$ be a subspace of $V$ satisfying $\phi(h) W=W,\ \forall h\in H$. Here $\phi(h) W$ means the image of $W$ under $\phi(h)$. We define a representation $\theta:H\to GL(W)$ of $H$ by $\theta(h):=\phi(h)|_W$. If $V$ has a direct decomposition
    \[
    V=\bigoplus_{\sigma \in G/H} \phi(\sigma)W
    \]
    where $\sigma$ is a representative of each left coset of $H$, we call $\phi$ is induced by $\theta$.
\end{definition}
\begin{remark}
    In fact $\phi(\sigma)W$ is independent of the choice of $\sigma$ in coset $\sigma H$ since $\phi(h) W=W,\forall h\in H$.
\end{remark}

\begin{theorem}[Frobenius reciprocity formula]\label{theorem frobenius}
    Suppose $\phi:G\to V$ is induced by $\theta: H\to W$ and $f$ is a class function on $G$. Frobenius reciprocity formula states that
    \[
    \langle f_H,\chi_\theta\rangle_H=\langle f,\chi_\phi\rangle_G
    \]
    where $f_H$ is the restriction of f on H,  $\langle\cdot,\cdot\rangle_G$ and $\langle\cdot,\cdot\rangle_H$ represent the inner product on the class function space of $G$ and $H$, respectively.
\end{theorem}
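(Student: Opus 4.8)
The plan is to reduce the identity to a pointwise formula for the induced character $\chi_\phi$ in terms of $\chi_\theta$, and then manipulate the inner product $\langle f,\chi_\phi\rangle_G$ directly, letting the class function hypothesis on $f$ do the decisive work. First I would establish the induced character formula. Fixing coset representatives $\sigma_1,\ldots,\sigma_m$ of $G/H$ with $m=|G|/|H|$, the set of subspaces in the decomposition $V=\bigoplus_i \phi(\sigma_i)W$ is permuted by the action: $\phi(g)$ sends $\phi(\sigma_i)W$ to $\phi(g\sigma_i)W=\phi(\sigma_j)W$, where $\sigma_j$ represents the coset $g\sigma_i H$. Writing $\phi(g)$ in block form with respect to this decomposition, only the diagonal blocks contribute to the trace, namely those indices $i$ with $g\sigma_i H=\sigma_i H$, equivalently $\sigma_i^{-1}g\sigma_i\in H$. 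On such a block, for $w\in W$ one computes $\phi(g)\phi(\sigma_i)w=\phi(\sigma_i)\theta(\sigma_i^{-1}g\sigma_i)w$, so the block is conjugate to $\theta(\sigma_i^{-1}g\sigma_i)$ and contributes $\chi_\theta(\sigma_i^{-1}g\sigma_i)$ to the trace. Summing over the fixed cosets and rewriting the count as a normalized sum over all of $G$ (extending $\chi_\theta$ by zero off $H$) yields
\[
\chi_\phi(g) = \frac{1}{|H|} \sum_{\substack{s \in G \\ s^{-1} g s \in H}} \chi_\theta(s^{-1} g s).
\]

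Next I would substitute this into the definition of the inner product and interchange the order of summation to obtain
\[
\langle f, \chi_\phi \rangle_G = \frac{1}{|G|\,|H|} \sum_{s \in G} \sum_{\substack{g \in G \\ s^{-1} g s \in H}} f(g)\,\overline{\chi_\theta(s^{-1} g s)}.
\]
For each fixed $s$ I change variables to $h=s^{-1}gs$, noting that $h$ ranges over all of $H$ as $g$ ranges over $\{shs^{-1}:h\in H\}$. Because $f$ is a class function, $f(g)=f(shs^{-1})=f(h)=f_H(h)$, so the summand becomes independent of $s$; the outer sum over $s\in G$ then contributes a factor $|G|$ which cancels, leaving
\[
\langle f, \chi_\phi \rangle_G = \frac{1}{|H|} \sum_{h \in H} f_H(h)\,\overline{\chi_\theta(h)} = \langle f_H, \chi_\theta \rangle_H,
\]
which is the claimed formula.

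The main obstacle is the first step: correctly deriving the induced character formula, in particular verifying that each diagonal block of $\phi(g)$ in the coset-adapted basis really is conjugate to $\theta(\sigma_i^{-1}g\sigma_i)$ (so has the same trace) and that the contributing count is well defined independently of the choice of coset representatives. The hypothesis $\phi(h)W=W$ for $h\in H$ from Definition~\ref{def induce representation} is exactly what makes $\phi(\sigma)W$ depend only on the coset, and hence what makes the block analysis consistent. Once the character formula is in hand, the remaining argument is a bookkeeping manipulation in which the class function property of $f$ is the only nontrivial ingredient.
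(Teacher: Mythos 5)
Your proof is correct and complete: it is the classical argument for Frobenius reciprocity, proceeding first through the induced character formula
\[
\chi_\phi(g)=\frac{1}{|H|}\sum_{\substack{s\in G\\ s^{-1}gs\in H}}\chi_\theta(s^{-1}gs),
\]
and then using the class-function property of $f$ together with the substitution $h=s^{-1}gs$ to collapse the double sum. Note that the paper itself offers no proof of this statement; it appears in the preliminaries as a standard fact cited from the references (e.g.\ Serre), so there is nothing in the paper to compare your argument against --- what you have written is precisely the standard textbook proof, and both the block-diagonal trace computation and the final bookkeeping are carried out correctly.
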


\section{The symmetry and interpolation in $\RR^n$}\label{section problem arising}
\subsection{The unisolvence of Lagrange interpolation in $\RR^n$}
\begin{definition}
    Let $F$ be a vector space of functions from $\RR^n$ to $\RR$, and $a_1,a_2,\ldots,a_N$ be $N$ distinct points in $\RR^n$. The Lagrange interpolation means that for arbitrary real numbers $b_1,b_2,\ldots,b_N$, we want to find a function $f\in F$ such that $f(a_i)=b_i$ holds for $1\leq i\leq N$. If such $f$ exists and is unique, we say the Lagrange interpolation is unisolvent. $a_1,a_2,\ldots,a_N$ and $f\in F$ are called the interpolation nodes and interpolation function, respectively.
\end{definition}

According to classical interpolation theory, a necessary condition for unisolvence is that the dimension of interpolation space is equal to the number of interpolation nodes. If $\{f_1,f_2,\ldots,f_N\}$ is a basis of $F$, the following theorem gives some equivalent conditions of unisolvence.

\begin{theorem}\label{thm unisolvence equivalent condition}
    Let $\{f_1,f_2,\ldots,f_N\}$ be a basis of interpolation function space and $a_1,a_2,\ldots,a_N$ be interpolation nodes. The following four conditions are equivalent:

    (1)This Lagrange interpolation is unisolvent;

    (2)The determinant $\det (f_i(a_j))_{N \times N}$ is non-zero;

    (3)Suppose $f\in F$ satisfying $f(a_i)=0,1\leq i\leq N$, then $f=0$.

    (4)View $a_i$ as an element of the dual space of $F$(i.e. the linear function on $F$) in the following way: $a_i(f):=f(a_i)$, and then $a_1,a_2,\ldots,a_N$ is linearly independent.
\end{theorem}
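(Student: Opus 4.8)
The plan is to route all four conditions through a single linear map and its matrix, reducing every equivalence to standard finite-dimensional linear algebra. Define the evaluation map $E\colon F\to\RR^N$ by $E(f)=(f(a_1),f(a_2),\ldots,f(a_N))$. This is clearly linear, and since $\dim F=N=\dim\RR^N$, the map $E$ is a bijection if and only if it is injective if and only if it is surjective; this equal-dimension fact (rank--nullity) is the single structural input I would use repeatedly.

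First I would observe that condition (1) is literally the assertion that $E$ is a bijection: existence of an interpolant $f$ for every data vector $(b_1,\ldots,b_N)$ is surjectivity of $E$, and uniqueness is injectivity. Next, writing a general element $f\in F$ as $f=\sum_j c_j f_j$ and using linearity gives $f(a_i)=\sum_j c_j f_j(a_i)$, so the matrix of $E$ relative to the basis $\{f_j\}$ of $F$ and the standard basis of $\RR^N$ has $(i,j)$-entry $f_j(a_i)$. Its determinant agrees, up to transposition, with $\det(f_i(a_j))_{N\times N}$, and since $\det M=\det M^{\top}$ the two vanish together. Hence $E$ is invertible exactly when this determinant is nonzero, establishing the equivalence of (1) and (2).

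For the equivalence of (1) and (3): condition (3) says precisely $\ker E=0$, i.e.\ $E$ is injective, which by the equal-dimension fact is equivalent to $E$ being bijective, hence to (1). For the equivalence of (2) and (4): I would identify each functional $a_i\in F^{*}$ with its coordinate vector $(a_i(f_1),\ldots,a_i(f_N))=(f_1(a_i),\ldots,f_N(a_i))$ relative to the basis of $F^{*}$ dual to $\{f_j\}$. The $a_1,\ldots,a_N$ are linearly independent in the $N$-dimensional space $F^{*}$ if and only if these $N$ coordinate vectors are linearly independent, which happens if and only if the $N\times N$ matrix they form, again $(f_j(a_i))$ up to transposition, has nonzero determinant; this is condition (2).

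Honestly there is no deep obstacle here: the content is a repackaging of rank--nullity together with the determinant criterion for invertibility. The only point demanding genuine care is the index bookkeeping, since the matrix that arises naturally from $E$ is the transpose of $(f_i(a_j))$; I would lean on $\det M=\det M^{\top}$ to reconcile the two formulations and on the injective-iff-surjective principle for linear maps between spaces of equal finite dimension to close each direction cleanly.
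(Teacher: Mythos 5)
Your proposal is correct and takes essentially the same approach as the paper: the paper's only non-standard step is the equivalence of (2) and (4), which it proves by defining $\phi: F \to F^*$, $f_i \mapsto a_i$, and computing that its matrix in the dual basis $\{f_i^*\}$ is exactly $(f_i(a_j))$ --- precisely your identification of each functional $a_i$ with its coordinate vector $(f_1(a_i),\ldots,f_N(a_i))$, up to the same transposition you handle via $\det M = \det M^{\top}$. The only difference is that you also spell out the equivalences (1) $\iff$ (2) $\iff$ (3) through the evaluation map $E$, which the paper simply cites as standard; that is harmless additional completeness, not a different route.
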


\begin{proof}
    Since (1)-(3) are standard conclusions, we only need to prove that (4) is equivalent to (1)-(3).

    We consider a linear mapping $\phi: F \to F^*$, where $\phi$ sends $f_i$ to $a_i$. We note that the dimension of $F$ is equal to that of $F^*$, where $F^*$ is the dual space of $F$, so (4) is equivalent to $\phi$ being surjective and further, to $\phi$ being invertible.

    We choose $\{f_1,f_2,\ldots,f_N\}$ and its dual basis $\{f_1^*,f_2^*,\ldots,f_N^*\}$ as the basis of $F$ and $F^*$, respectively. Let $\Phi=(\Phi_{ij})$ be the matrix of $\phi$ under the two bases, by $\phi(f_1,f_2,\ldots,f_N)=(f_1^*,f_2^*,\ldots,f_N^*)\Phi$. We have for each $j$,
    \[a_j=f_1^*\Phi_{1j}+f_2^*\Phi_{2j}+\ldots+f_N^*\Phi_{Nj}
    \]
    Both sides of the equation taking values at $f_i$ shows that $f_i(a_j)=\Phi_{ij}$, hence the matrix $\Phi=(f_i(a_j))_{N\times N}$. The condition (2) means that $\phi$ is invertible, which is equivalent to (4).
\end{proof}

\subsection{$S_n$ acts on $\RR^n$}

Let $S_n$ be the set of all permutations of $\{1,2,\ldots,n\}$. That is to say, each $\sigma\in S_n$ is a bijection from $\{1,2,\ldots,n\}$ to itself. For each $\sigma\in S_n$, we can view it as a mapping $f(\sigma)$ from $\RR^n$ to $\RR^n$ in the following way: $f(\sigma)(x_1,x_2,\ldots,x_n):=(x_{\sigma(1)},x_{\sigma(2)},\ldots,x_{\sigma(n)})$. It is obvious that $f(\sigma)$ is an invertible linear mapping and satisfies that $f(\sigma)\circ f(\eta)=f(\sigma\eta)$ for $\sigma,\eta\in S_n$. Therefore $f$ provides not only an action of $S_n$ on $\RR^n$ but also a representation of $S_n$. For convenience, in the following text, we will omit $f$ and use $\sigma$ instead of $f(\sigma)$ when there is no ambiguities.

\begin{definition}\label{def symmetric subset of R^n}
    Suppose $X$ is a subset of $\RR^n$, we call it a symmetric subset if $\sigma x\in X,\forall \sigma \in S_n,x\in X$.
\end{definition}

According to the above definition, $\RR^n$ itself is a symmetric subset and each orbit is a symmetric subset under the action of $S_n$.

\subsection{$S_n$ acts on $f:\RR^n\to\RR$}

For a real-valued function $f$ on $\RR^n$, we consider the action of $S_n$ on it as follows: $\sigma f := f \circ \sigma^{-1}$. The reason $\sigma^{-1}$ is used here instead of $\sigma$ is to ensure that this is a group action. The verification is as follows:
\[
(\sigma\eta)(f) = f \circ (\sigma\eta)^{-1} = f \circ \eta^{-1} \circ \sigma^{-1} = \sigma (\eta(f))
\]
We actually obtain an action of $S_n$ on the set $\{f : \RR^n \to \RR\}$, and now we consider actions on its subsets.

\begin{definition}
    Let $F$ be a subset of $\{f:\RR^n\to\RR\}$. We call it symmetric if $\sigma f\in F,\ \forall \sigma\in S_n,f\in F$.
\end{definition}
Similar to the definition~\ref{def symmetric subset of R^n}, if $F$ is symmetric, we can consider the action of $S_n$ on $F$.

\subsection{Symmetric Lagrange interpolation}
Consider a Lagrange interpolation, where the interpolation space is $F$ and the interpolation nodes are $a_1,a_2,\ldots,a_N$. Assume that $F$ not only is symmetric, but also has a symmetric set of basis functions and $\{a_1,a_2,\ldots,a_N\}$ is a symmetric subset of $\RR^n$.

It should be noted that a symmetric interpolation function space does not necessarily have a symmetric basis function.

\begin{example}
    In $\RR^2$, where $x$ and $y$ are the coordinates, $F$ is defined as $\spn\{x-y\}$. It is clear that $y-x = -(x-y) \in F$, therefore $F$ is symmetric. However, it does not have a symmetric set of basis functions.
\end{example}

Our question is: what are the necessary conditions for the interpolation function and interpolation nodes to ensure the unisolvence of the interpolation problem? This article presents the following results:

\begin{enumerate}
    \item Given the interpolation function $F$, the symmetry of nodes must be unique; that is, if two nodes sets both satisfy unisolvence, then these two nodes sets are equivalent under the action of $S_n$.
    
    \item If the basis functions of $F$ satisfy certain properties, such as having symmetric monomial basis functions, then the basis functions and nodes set are equivalent under the action of $S_n$.
\end{enumerate}

We provide the following examples to assist in explanation.

\begin{example}\label{example n=3 p=2}
In $\RR^3$, where $F = \{x^2, y^2, z^2, xy, yz, zx\}$ is symmetric. Let $\spn \, F$ be the interpolation space, and we want to find symmetric interpolation nodes satisfying unisolvence.

There are three kinds of orbits in $\RR^3$: the first is a point with three identical coordinates, the second is a point with two identical coordinates but different from the third coordinate, and the third is a point with three distinct coordinates. The number of points in the orbits is $1, 3, 6$, respectively.

\begin{table}[h]
\begin{center}
\caption{The orbit classes in $\RR^3$}
\begin{tabular}{lllll}
\cline{1-3}
orbit class & \multicolumn{1}{c}{orbit}                                                                          & number of elements &  &  \\ \cline{1-3}
class 1     & $\{(a,a,a)\}$                                                                                      & 1                  &  &  \\ \cline{1-3}
class 2     & $\{(a,a,b),(a,b,a),(b,a,a)\}$                                                                      & 3                  &  &  \\ \cline{1-3}
class 3     & \begin{tabular}[c]{@{}l@{}}$\{(a,b,c),(a,c,b),(b,a,c),$\\ $(b,c,a),(c,a,b),(c,b,a)\}$\end{tabular} & 6                  &  &  \\ \cline{1-3}
\end{tabular}
\end{center}
\end{table}

Unisolvence requires that the number of nodes is equal to the dimension of the interpolation space. Therefore, all possible combinations are: six class-1 orbits; three class-1 orbits and one class-2 orbit; two class-2 orbits; one class-3 orbit.

In Case 1, suppose $P = \{(a_i, a_i, a_i) : i = 1, 2, 3, 4, 5, 6\}$. By calculating the determinant in Theorem~\ref{thm unisolvence equivalent condition}(2), we find that it is always non-unisolvent regardless of the value of $a_i$, and the same applies to Cases 2 and 4.

In Case 3, suppose $P = \{(a, a, b), (a, b, a), (b, a, a), (c, c, d), (c, d, c), (d, c, c)\}$ where $a, b, c, d \in \RR$. By Theorem~\ref{thm unisolvence equivalent condition}(2), the interpolation is unisolvent if and only if $(a-b)^2(c-d)^2(ad-bc)^2(4ac-ad-bc-2bd)\neq 0$.
\end{example}

The above example illustrates that, given the interpolation function space, in order to ensure unisolvence, the symmetry of the interpolation nodes should also be determined. If we consider the set of basis functions and the set of interpolation nodes, we will find that they are actually equivalent under the action of $S_n$ . The bijection can be given as follows:
\[
\begin{aligned}
    f(a,a,b)=z^2,f(a,b,a)=y^2,f(b,a,a)=x^2\\
    f(c,c,d)=xy,f(c,d,c)=zx,f(d,c,c)=yz
\end{aligned}
\]

\section{A necessary condition for nodes set}\label{section equal number of orbits}
\subsection{The type and equivalence of orbits in $\RR^n$}\label{subsection orbit class}

The set $\RR^n$ can be divided into disjoint orbits under the action of $S_n$. We distinguish orbits by defining their types.

Given an orbit $O$, let $x=(x_1,x_2,\ldots,x_n)$ be one element of $O$. We divide $x_1,x_2,\ldots,x_n$ into several equivalence classes based on whether the components are equal. Components in the same equivalence class are equal, while component values in different equivalence classes are different. The number of equivalence classes with $i$ elements is denoted by $c_i$, where $1 \leq i \leq n$. For example, $c_1$ is the number of components in $x$ that are different from the other components.
\begin{definition}
    We refer to $(c_1, c_2,\ldots, c_n)$ as the type of the orbit $O$.
\end{definition}
\begin{remark}
    Noting that we define the type based on one element in the orbit, however, according to the definition of the action of $S_n$, the elements in the same orbit are only exchanged in the order of their components, so this definition is independent of the selection of $x$.
\end{remark}
In Example~\ref{example n=3 p=2}, the types of the three orbits are $(0,0,1)$, $(1,1,0)$, and $(3,0,0)$.

Because there are a total of $n$ components, the equation $c_1 + 2c_2 + \cdots + nc_n = n$ holds, where $c_i$ is a non-negative integer. Conversely, given any non-negative integer solution $(c_1, c_2, \ldots, c_n)$ to the Diophantine equation $c_1 + 2c_2 + \cdots + nc_n = n$, we can construct an orbit such that its type is exactly $(c_1, c_2, \ldots, c_n)$. Therefore, there exists a bijection between the types of orbits in $\RR^n$ and the non-negative integer solutions of the Diophantine equation $c_1 + 2c_2 + \cdots + nc_n = n$. Thus, the number of orbits classes in $\RR^n$ is finite, equal to the number of non-negative integer solutions of the Diophantine equation $c_1 + 2c_2 + \cdots + nc_n = n$.

We have the following theorem regarding the type and equivalence of orbits.
\begin{theorem}
    Under the action of $S_n$, two orbits in $\RR^n$ is equivalent if and only if they have the same type.
\end{theorem}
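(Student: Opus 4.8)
The plan is to prove both directions of the equivalence by constructing, in each case, an explicit $S_n$-equivariant bijection between two orbits of the same type, and conversely by showing that equivalence forces equal type. The key observation is that an orbit of type $(c_1, c_2, \ldots, c_n)$ is entirely determined, as an $S_n$-set, by how the components of a representative point are grouped into blocks of equal value; the actual numerical values of the coordinates are irrelevant to the action, since $S_n$ only permutes positions.

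For the ``if'' direction, suppose two orbits $O$ and $O'$ have the same type $(c_1, c_2, \ldots, c_n)$. First I would fix representatives $x \in O$ and $x' \in O'$. The type condition guarantees that the partition of the index set $\{1, 2, \ldots, n\}$ induced by the equal-component relation of $x$ has exactly $c_i$ blocks of size $i$ for each $i$, and the same holds for $x'$. Since these two partitions have the same number of blocks of each size, there exists a permutation $\tau \in S_n$ that carries the block structure of $x$ onto that of $x'$, so that $x' = \tau x$. I would then define $\psi: O \to O'$ by $\psi(\sigma x) := \sigma x'= \sigma \tau x$. The main thing to check is that this is well-defined: if $\sigma_1 x = \sigma_2 x$, then $\sigma_2^{-1}\sigma_1 \in \stab(x)$, and because $\tau$ maps the block structure of $x$ to that of $x'$ one verifies $\tau \stab(x) \tau^{-1} = \stab(x')$, which gives $\sigma_1 x' = \sigma_2 x'$. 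Equivariance $\psi(\eta \sigma x) = \eta \psi(\sigma x)$ is then immediate from the definition, and bijectivity follows since the construction is symmetric in $O$ and $O'$.

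For the ``only if'' direction, suppose $O$ and $O'$ are equivalent via an $S_n$-equivariant bijection $\psi$. Fix $x \in O$ and set $x' = \psi(x) \in O'$. Equivariance forces $\stab(x) = \stab(x')$: indeed $\sigma x = x$ implies $\sigma x' = \sigma \psi(x) = \psi(\sigma x) = \psi(x) = x'$, and the reverse inclusion follows by applying $\psi^{-1}$. The crucial link is that the stabilizer of a point in $\RR^n$ under the coordinate-permutation action is precisely the Young subgroup preserving the partition of $\{1, \ldots, n\}$ into equal-component blocks, namely a direct product of symmetric groups on the blocks. Since the type $(c_1, \ldots, c_n)$ records exactly the block sizes, and equal stabilizer subgroups (as concrete subgroups of $S_n$, up to the relabeling induced by a conjugation that fixes the block-size multiset) force equal block-size data, I would conclude that $O$ and $O'$ share the same type.

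I expect the main obstacle to be the well-definedness and stabilizer-matching bookkeeping in the ``if'' direction, together with making precise the claim that the stabilizer as an abstract conjugacy class of subgroups determines the multiset of block sizes. The cleanest route is probably to phrase everything in terms of the set partition of $\{1,2,\ldots,n\}$ attached to each orbit: an orbit is equivalent, as an $S_n$-set, to the action of $S_n$ on the set of partitions $S_n$-conjugate to a fixed partition, and two such coset-type actions $S_n/\stab(x)$ and $S_n/\stab(x')$ are equivalent exactly when the stabilizers are conjugate subgroups, which for Young subgroups happens precisely when the block-size multisets agree. Framing the argument through $\stab(x)$ being a conjugate of a standard Young subgroup turns both directions into a statement about conjugacy of Young subgroups, which is exactly the equal-type condition.
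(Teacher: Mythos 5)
Your ``only if'' direction is correct and is essentially the paper's argument: equivariance forces $\stab(x)=\stab(\psi(x))$, and since the stabilizer of a point is exactly the Young subgroup of its equal-component partition (indices $i,j$ lie in the same block iff the transposition $(i\,j)$ stabilizes the point), equal stabilizers force equal types. The genuine gap is in your ``if'' direction. First, the assertion $x'=\tau x$ is false: points in distinct orbits generally have different coordinate \emph{values}, and a permutation only rearranges positions, so no $\tau\in S_n$ can carry $x$ to $x'$; what $\tau$ does is match the index partitions. That slip is harmless by itself, since your map $\psi(\sigma x):=\sigma x'$ does not use it. What breaks is the well-definedness verification: from $\sigma_2^{-1}\sigma_1\in\stab(x)$ and $\tau\stab(x)\tau^{-1}=\stab(x')$ you may only conclude $\tau\sigma_2^{-1}\sigma_1\tau^{-1}\in\stab(x')$, \emph{not} $\sigma_2^{-1}\sigma_1\in\stab(x')$, so the step ``which gives $\sigma_1 x'=\sigma_2 x'$'' is a non-sequitur. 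Conjugate stabilizers are not enough; the map $\psi(\sigma x):=\sigma x'$ genuinely fails to be well defined when $\stab(x)\neq\stab(x')$. Concretely, take $n=3$, $x=(1,1,2)$, $x'=(3,4,4)$, both of type $(1,1,0)$: then $\stab(x)=\{e,(1\,2)\}$ and $\stab(x')=\{e,(2\,3)\}$ are conjugate but unequal, and with $\sigma_1=(1\,2)$, $\sigma_2=e$ one has $\sigma_1x=\sigma_2x$ while $\sigma_1x'=(4,3,4)\neq(3,4,4)=\sigma_2x'$.

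The repair is exactly the point of the paper's proof: do not take arbitrary representatives, but normalize them so the stabilizers agree \emph{on the nose}. Either replace $x'$ by $x'':=\tau^{-1}x'\in O'$ (whose index partition then literally equals that of $x$, so $\stab(x'')=\stab(x)$), or, as the paper does, arrange both representatives' components in a canonical order so that both stabilizers equal the same standard Young subgroup; then $\psi(\sigma x):=\sigma x''$ is well defined, equivariant, and bijective. Alternatively, the route you sketch in your closing paragraph is sound: each orbit is equivalent to the coset action on $S_n/\stab(x)$, and $S_n/H$ and $S_n/K$ are equivalent $S_n$-sets iff $H$ and $K$ are conjugate, while Young subgroups are conjugate iff their block-size multisets (i.e., types) agree. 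But that version requires you to actually prove or cite the coset-space conjugacy criterion, which is precisely the bookkeeping your second paragraph tried to shortcut; the paper sidesteps conjugacy entirely by forcing equality of stabilizers through the canonical arrangement.
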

\begin{proof}
    Suppose two orbits $O$ and $\Tilde{O}$ have the same type $(c_1,c_2,\ldots,c_n)$. We choose a representative element $x$ contained in $O$ such that its components arrange in the following way: the first $c_1$ are the components which are different from all other components, followed by $2c_2$ components which are all equal pairs of components, and so on. For example, $(b,c,d,d,c,d,a)$ needs to be arranged as $(a,b,c,c,d,d,d)$. Because $O$ contains all the component arrangements of $x$, such $x$ can always be found.

    Let $\sigma\in\stab(x)$, the stabilizer of $x$. $\sigma$ needs to keep the first $c_1$ components fixed, and each component can only exchange positions with its own equal component.

    We define the action of $S_n$ on $S_n/\stab(x)$, all left cosets of $\stab(x)$ as $\sigma (\eta \stab(x))=(\sigma\eta) \stab(x)$. We claim that $O$ is equivalent to $S_n/\stab(x)$. 
    
    In fact, for $\sigma x\in O$, we define $f(\sigma x)=\sigma \stab(x)\in S_n/\stab(x)$. Since $\sigma x =\eta x$ implies $\eta^{-1}\sigma\in \stab(x)$, which again implies $f(\sigma x)=\sigma \stab(x)=\eta \stab(x)=f(\eta x)$, $f$ is well defined. We check $O$ is equivalent to $S_n/\stab(x)$ under $f$: 
    \[
    \eta(f(\sigma x))=\eta(\sigma \stab(x))=(\eta\sigma) \stab(x)=f(\eta \sigma(x)).
    \]

    We use the same method to select a representative element $y$ from $\Tilde{O}$ and then $\stab(x)=\stab(y)$. The above argument shows that $O$ and $\Tilde{O}$ are equivalent to $S_n/\stab(x)$ and $S_n/\stab(y)$, respectively. Hence $O$ and $\Tilde{O}$ are equivalent based on transitivity.

    Conversely, suppose orbits $O$ and $\Tilde{O}$ are equivalent and $f:O\to\Tilde{O}$ is the bijection. The type of $O$ is $(c_1,c_2,\ldots,c_n)$. We choose $x$ as a representative element of $O$ in the same way. We have $\stab(x)=\stab(f(x))$:
    \[
    \begin{aligned}
        \sigma\in\stab(f(x))
        &\iff \sigma f(x)=f(x)
        \iff f(\sigma x)=f(x)\\
        &\iff \sigma x=x
        \iff \sigma\in\stab(x).
    \end{aligned}
    \]
    This shows that each $\sigma\in\stab(f(x))$ keeps the first $c_1$ components fixed, and exchanges $i$-th and $(i+1)$-th component for $i=c_1+1,c_1+3,\ldots,c_1+2c_2-1$, and so on. Thus the type of $f(x)$ which is the type of $\Tilde{O}$ is also $(c_1,c_2,\ldots,c_n)$. This competes the proof.
\end{proof}

For the convenience of the following description, we define an order among all orbit classes in $\RR^n$. Let $(c_1,c_2,\ldots,c_n)$ and $(\Tilde{c}_1,\Tilde{c}_2,\ldots,\Tilde{c}_n)$ be types of different orbit classes. We define $(c_1,c_2,\ldots,c_n)>(\Tilde{c}_1,\Tilde{c}_2,\ldots,\Tilde{c}_n)$ if there exists $1\leq k\leq n$ such that $c_k>\Tilde{c}_k$ and $c_l=\Tilde{c}_l$ for all $l>k$. That is to say, when we compare two types, we compare from high $c_n$ to low $c_1$. It is a total order among all orbits classes.

\subsection{The equivalence between finite symmetric subsets}

Let $X$ be a finite symmetric subset of $\RR^n$. $X$ can be divided into the union of disjoint orbits. We will denote by $c$ the number of orbit classes in $\RR^n$, where $c$ is the number of non-negative integer solution of Diophantine equation $c_1 + 2c_2 + \cdots + nc_n = n$. We get a vector $(X_1,X_2,\ldots,X_c)\in \NN^c$, called the orbit vector of $X$, where each component $X_i$ is a non-negative integer that is the number of the $i$-th orbit classes $X$ has. The order is in descending order according to the type of orbit classes. We have a theorem that is used to determine whether two symmetric subsets are equivalent.

\begin{theorem}\label{theorem condition for equivalent symmetric subsets}
    Let $X$ and $Y$ be two finite symmetric subsets. We assume that their orbit vectors are $(X_1, X_2, \dots, X_c)$ and $(Y_1, Y_2, \dots, Y_c)$, respectively. Then, a sufficient and necessary condition for the equivalence of $X$ and $Y$ is $X_i = Y_i$, $1 \leq i \leq c$.
\end{theorem}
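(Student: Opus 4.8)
The plan is to reduce the statement to the orbit-equivalence result just established (two orbits are equivalent if and only if they have the same type), promoting it from single orbits to finite disjoint unions of orbits. First I would record the structural fact that any finite symmetric subset $X \subset \RR^n$ is a disjoint union of $S_n$-orbits, and that its orbit vector $(X_1,\dots,X_c)$ merely tallies, for each of the $c$ types, how many orbits of $X$ fall into that class. The whole proof then rests on a single preliminary observation: an $S_n$-equivariant bijection carries orbits to orbits. Indeed, if $\psi$ is equivariant then $\psi(S_n x)=S_n\psi(x)$, so $\psi$ sends the orbit of $x$ onto the orbit of $\psi(x)$; being a bijection, it thereby induces a bijection between the orbit set of its domain and that of its codomain, and restricts on each orbit to an equivalence of orbits.

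For the sufficiency direction ($X_i=Y_i \Rightarrow X\sim Y$), I would argue type by type. For each type $i$, both $X$ and $Y$ contain exactly $X_i=Y_i$ orbits of that type; list them as $O^{(i)}_1,O^{(i)}_2,\dots$ in $X$ and $O'^{(i)}_1,O'^{(i)}_2,\dots$ in $Y$. By the orbit-equivalence theorem, orbits of the same type are equivalent, so for each matched pair $(O^{(i)}_k,O'^{(i)}_k)$ I choose an $S_n$-equivariant bijection $\psi^{(i)}_k$. Since the orbits partition $X$ and $Y$, the disjoint union of all these bijections is a well-defined bijection $\psi:X\to Y$, and equivariance is checked pointwise: each $x$ lies in a unique orbit on which $\psi$ agrees with an equivariant map, and $S_n$ preserves that orbit, so $\psi(\sigma x)=\sigma\psi(x)$. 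Hence $X\sim Y$.

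For the necessity direction ($X\sim Y \Rightarrow X_i=Y_i$), let $\psi:X\to Y$ be an equivariant bijection. By the preliminary observation, $\psi$ induces a bijection between the orbits of $X$ and those of $Y$, and its restriction to any orbit $O$ is an equivalence $O \cong \psi(O)$; the orbit-equivalence theorem then forces $O$ and $\psi(O)$ to have the same type. Thus $\psi$ gives a type-preserving bijection between the orbit set of $X$ and that of $Y$, so the number of orbits of each type must coincide, i.e.\ $X_i=Y_i$ for every $i$.

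The argument is essentially bookkeeping layered on top of the orbit-equivalence theorem, so I expect no deep difficulty. The one point that genuinely demands care, and which I regard as the main (if modest) obstacle, is the preliminary lemma that an $S_n$-equivariant bijection respects the orbit partition, together with the verification that a disjoint union of equivariant bijections is again equivariant. I would isolate and prove this cleanly at the outset so that both directions follow immediately; I would avoid a purely enumerative approach (e.g.\ via Burnside's Lemma), since that counts total orbits but does not by itself separate the type-by-type counts that the statement requires.
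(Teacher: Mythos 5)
Your proof is correct and follows essentially the same route as the paper: sufficiency by decomposing $X$ and $Y$ into orbits, matching same-type orbits via the orbit-equivalence theorem, and gluing the resulting equivariant bijections. The only difference is that you spell out the necessity direction (an equivariant bijection carries orbits to orbits of the same type), which the paper simply declares evident; this is a worthwhile addition but not a different approach.
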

\begin{proof}
    The necessity of the theorem is evident, so we only prove sufficiency. Suppose $X_i = Y_i$, $1 \leq i \leq c$. We decompose $X$ and $Y$ into the union of several orbits: $X = \bigcup_{i=1}^t U_i$ and $Y = \bigcup_{i=1}^t V_i$.

    Since the numbers of orbit classes of $X$ and $Y$ are the same, we can assume that $U_i$ and $V_i$ are equivalent for each $i$, $1 \leq i \leq t$. That is, there exists a bijection $f_i: U_i \to V_i$ such that $f_i(\sigma x) = \sigma(f_i(x))$ for all $x \in U_i$ and $\sigma \in S_n$. Then, we can define a bijection $f: X \to Y$, by letting $f(x) := f_i(x)$ for $x\in U_i$. With this definition, $f$ is indeed a bijection.

    Let $x \in U_i$, then $\sigma(x) \in U_i$, and 
    \[
    f(\sigma(x)) = f_i(\sigma(x)) = \sigma(f_i(x)) = \sigma(f(x)).
    \]
    
    This shows that $X$ and $Y$ are indeed equivalent under the action of $S_n$.
\end{proof}

According to the above theorem, to demonstrate the unique symmetry of the interpolation nodes set under the action of $S_n$, it suffices to show that the interpolation nodes set has a unique orbit vector. Below, we will prove that such an orbit vector is indeed unique through different linear constraints.

\subsection{unisolvence and group action}
In Example~\ref{example n=3 p=2}, to ensure unisolvence, we note that the number of orbits for symmetric basis functions and symmetric interpolation nodes set are the same. This is not a coincidence. In fact, the following theorem holds.

\begin{theorem}\label{theorem number of orbit equal}
    Suppose the symmetric interpolation function space $F$ has a symmetric basis $\{f_1, f_2, \ldots, f_N\}$, and the symmetric interpolation nodes set is $\{a_1, a_2, \ldots, a_N\}$. If the interpolation problem is unisolvent, then the number of orbits under the action of $S_n$ for the basis functions and interpolation nodes set are equal.
\end{theorem}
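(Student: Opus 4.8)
The plan is to compute both orbit counts through Burnside's Lemma (Lemma~\ref{theorem Burnside}) and then to recognize the two resulting group actions as a representation of $S_n$ and its contragredient, whose characters have the same average over the group. The whole argument hinges on turning "number of orbits" into "average number of fixed points", and then on the observation that fixed points of a permutation action are exactly the diagonal contributions to a character.

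First I would handle the basis side. Because $\{f_1,\dots,f_N\}$ is symmetric, each $\sigma\in S_n$ permutes it, so the action $\sigma f=f\circ\sigma^{-1}$ makes $\rho\colon S_n\to GL(\spn_\CC\{f_i\})$ a permutation representation. For this action $|\fix(\sigma)|$ equals the number of indices $i$ with $\sigma f_i=f_i$, which is precisely the diagonal count $\Tr(\rho(\sigma))=\chi_\rho(\sigma)$. Burnside's Lemma then expresses the number of orbits of basis functions as $\frac{1}{|S_n|}\sum_{\sigma\in S_n}\chi_\rho(\sigma)$.

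Next I would bring in unisolvence to identify the node action with the dual of $\rho$, which I expect to be the main obstacle. By Theorem~\ref{thm unisolvence equivalent condition}(4), unisolvence forces $a_1,\dots,a_N$, viewed in $F^*$ via $a_i(f)=f(a_i)$, to be linearly independent, hence a basis of $F^*$. I would equip $F^*$ with the contragredient action $(\rho^*(\sigma)\lambda)(f):=\lambda(f\circ\sigma)$ and compute $(\rho^*(\sigma)a_i)(f)=a_i(f\circ\sigma)=f(\sigma a_i)$, so that $\rho^*(\sigma)$ permutes the basis $\{a_i\}$ of $F^*$ in exactly the same way $\sigma$ permutes the nodes. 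The delicate points are getting the inverse in the right place in the contragredient formula and, crucially, invoking unisolvence so that $\{a_i\}$ really is a basis; this is what lets the node permutation be read off as a permutation matrix whose trace equals the fixed-point count. With this in hand, $|\fix(\sigma)|$ for the node action equals $\chi_{\rho^*}(\sigma)$, and Burnside gives the number of node orbits as $\frac{1}{|S_n|}\sum_{\sigma\in S_n}\chi_{\rho^*}(\sigma)$.

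Finally I would close the argument using the standard character identity $\chi_{\rho^*}(\sigma)=\chi_\rho(\sigma^{-1})$ (the matrix of $\rho^*(\sigma)$ in the dual basis is the transpose of $\rho(\sigma^{-1})$, and transposition preserves the trace). Reindexing the sum by $\sigma\mapsto\sigma^{-1}$, which is a bijection of $S_n$, yields $\sum_{\sigma}\chi_{\rho^*}(\sigma)=\sum_{\sigma}\chi_\rho(\sigma^{-1})=\sum_{\sigma}\chi_\rho(\sigma)$. Hence the two Burnside averages coincide and the number of orbits of the basis functions equals the number of orbits of the nodes set. (In fact the comparison holds term by term, since $\fix(\sigma)=\fix(\sigma^{-1})$ for any permutation action gives $\chi_\rho(\sigma^{-1})=\chi_\rho(\sigma)$, so each $\sigma$ fixes equally many nodes and basis functions; but the averaged form is all that Burnside requires.)
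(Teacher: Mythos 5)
Your proof is correct, but it takes a genuinely different route from the paper's. The paper argues by contradiction with elementary linear algebra: if the basis functions had more orbits ($s>t$), it sums the basis functions over each orbit to get $s$ linearly independent $S_n$-invariant functions $h_i$, solves an underdetermined linear system to produce a nonzero $h\in F$ vanishing at one representative of each node orbit, and uses the invariance $h_i\circ\sigma=h_i$ to conclude $h$ vanishes at every node, contradicting condition (3) of Theorem~\ref{thm unisolvence equivalent condition}; the case $s<t$ is handled dually via condition (4). You instead compute both orbit counts directly with Burnside's Lemma~\ref{theorem Burnside}, and your key step --- using unisolvence to realize the nodes as a linearly independent family, hence a basis, of $F^*$, so that the node action becomes the contragredient permutation representation $\rho^*$ with $\chi_{\rho^*}(\sigma)=\chi_\rho(\sigma^{-1})=\chi_\rho(\sigma)$ --- is sound; the injectivity of the map from points to evaluation functionals (which linear independence provides) is exactly what makes the fixed-point counts of the two actions match. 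Each approach buys something: the paper's proof needs nothing beyond rank counting and plugs directly into the two unisolvence criteria, while yours establishes the stronger, per-element statement that every $\sigma$ fixes exactly as many nodes as basis functions; that statement immediately yields the paper's subsequent remark that the theorem survives replacing $S_n$ by any subgroup (the fact needed for Theorem~\ref{theorem VX=r}), and it meshes naturally with the character-theoretic machinery the paper only introduces later, in Section~\ref{section V is positive definite}, to prove the invertibility of $V$.
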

\begin{proof}
    Let $\{O_1,O_2,\ldots,O_s\}$ be orbits of $\{f_i,1\leq i\leq N\}$ and $\{\Tilde{O}_1,\Tilde{O}_2,\ldots,\Tilde{O}_t\}$ be that of $\{a_i,1\leq i\leq N\}$. We argue by contradiction.

    Suppose $s>t$. For $1\leq i\leq s$, we define $h_i=\sum\limits_{f\in O_i} f\in F$. For $1\leq j\leq t$, let $b_j$ be a representative of $\Tilde{O}_j$. Consider a linear system as follows
    \[
    \begin{bmatrix}
        h_1(b_1) & h_2(b_1) & \cdots & h_s(b_1)\\
        h_1(b_2) & h_2(b_2) & \cdots & h_s(b_2)\\
        \vdots   & \vdots   & \ddots & \vdots  \\
        h_1(b_t) & h_2(b_t) & \cdots & h_s(b_t)\\
    \end{bmatrix}
    \begin{bmatrix}
        \alpha_1\\
    \alpha_2\\
    \vdots\\
    \alpha_s
    \end{bmatrix}
    =0.
    \]
    On account of $s>t$, it has a nontrivial solution $[\alpha_1,\alpha_2,\ldots,\alpha_s]^T$. Set $h=\sum\limits_{i=1}^s \alpha_ih_i$, then $h\neq 0\in F$. We claim that $h(a_i)=0,1\leq i\leq N$. Indeed for each $a_i$ there exists $b_j$ and $\sigma \in S_n$ such that $a_i=\sigma(b_j)$ because each $a_i$ is in some orbit. Hence
    \[
    h(a_i)=h\circ\sigma(b_j)=\sum_{k=1}^s \alpha_k (h_k\circ\sigma)(b_j)=\sum_{k=1}^s \alpha_kh_k(b_j)=0.
    \]
    The third equality holds because $\sigma$ is a permutation on $O_k$ which yields $h_k\circ \sigma=\sum\limits_{f\in O_k}f\circ \sigma=\sum\limits_{f\in O_k}f=h_k$. The last equality holds since $[\alpha_1,\alpha_2,\ldots,\alpha_s]^T$ is a solution of the linear system. Theorem~\ref{thm unisolvence equivalent condition}(3) now shows a contradiction.

    Suppose $s<t$. By a similar argument, we can prove that there exists some $a$ which is a non-zero linear combination of $a_j$ such that $a(f_i)=0,1\leq i\leq N$. This gives that $a=0$ in the dual space of $F$. By Theorem~\ref{thm unisolvence equivalent condition}(4), it is contrary to the assumption.

    Therefore, only $s=t$ can hold, which proves the theorem.
\end{proof}
\begin{remark}
    Note that in the proof of the above theorem, we only used the action of $S_n$ on the interpolation basis functions and interpolation nodes. Therefore, if we replace $S_n$ with any of its subgroup, the theorem still holds.
\end{remark}

Suppose the interpolation nodes set $\{a_1, a_2, \ldots, a_N\}$ forms a finite symmetric subset of $\RR^n$, with orbit vector $(X_1, X_2, \ldots, X_c)$. According to the definition of orbit vector, we have that the number of orbits for the interpolation nodes set equals $X_1 + X_2 + \cdots + X_c$. According to the above theorem, this must be equal to the number of orbits for the basis vectors. This provides a linear constraint on the orbit vectors.

For an orbit in $\RR^n$ under the action of $S_n$, if we replace the action of $S_n$ with the action of a subgroup $H$ of $S_n$, the orbit will be decomposed into smaller orbits. For example, suppose $x$ and $y$ belong to the same orbit $O$ under the action of $S_n$, i.e., there exists $g \in S_n$ such that $y = g(x)$. However, there might not exist such $g$ in $H$, leading to them belonging to different orbits. Then the orbit $O$ will be decomposed into different orbits contains $x$ and $y$ respectively. 

By replacing the action of $S_n$ with that of a subgroup of $S_n$ on the basis functions and interpolation nodes, we will obtain additional linear constraints. But before proceeding to this point, we need the following theorem about orbital classes.

\begin{theorem}
    Let $X$ and $Y$ be two orbits in $\RR^n$ under the action of $S_n$. Suppose $H=\stab(x)$ is the stabilizer of an element $x$ in $X$, and $v$ is the number of orbits of $Y$ under the action of $H$. Then, $v$ depends only on the orbit classes to which $X$ and $Y$ belong, and is independent of the choice of $x$.
\end{theorem}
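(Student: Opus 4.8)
The plan is to prove two independence statements simultaneously: that $v$ is unchanged when the element $x$ varies within the orbit $X$, and that $v$ is unchanged when $X$ and $Y$ are replaced by other orbits $X'$, $Y'$ of the same respective types. The engine for the whole argument is a single observation: if $K_1 = gK_2g^{-1}$ are conjugate subgroups of $S_n$ and $Z\subseteq\RR^n$ is any $S_n$-invariant set, then the translation map $y\mapsto gy$ is a bijection of $Z$ carrying $K_2$-orbits onto $K_1$-orbits, since $y_2 = hy_1$ with $h\in K_2$ forces $gy_2 = (ghg^{-1})(gy_1)$ with $ghg^{-1}\in K_1$. Hence conjugate subgroups have the same number of orbits on $Z$, and I would apply this repeatedly with $Z = Y$.

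First I would settle independence of the choice of $x$. If $x,x'\in X$, then $x' = gx$ for some $g\in S_n$, so $\stab(x') = g\,\stab(x)\,g^{-1}$, and the observation above with $Z = Y$ gives that the number of $\stab(x)$-orbits on $Y$ equals the number of $\stab(x')$-orbits on $Y$. The same mechanism handles replacing $X$ by an orbit $X'$ of the same type, once one knows that stabilizers of elements of equal type are conjugate in $S_n$. Indeed, $\stab(x)$ depends only on the set partition of $\{1,\dots,n\}$ recording which coordinates of $x$ coincide, and it is exactly the Young subgroup $\stab(x)\cong S_1^{c_1}\times\cdots\times S_n^{c_n}$ attached to that partition, as exhibited in the proof of the orbit-equivalence theorem. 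Two orbits of the same type $(c_1,\dots,c_n)$ induce set partitions with identical block sizes, which are interchanged by some $\tau\in S_n$, whence $\stab(x') = \tau\,\stab(x)\,\tau^{-1}$, and applying the observation once more yields equality of the two orbit counts.

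It remains to replace $Y$ by an orbit $Y'$ of the same type while keeping $H=\stab(x)$ fixed. By the orbit-equivalence theorem, $Y$ and $Y'$ are equivalent as $S_n$-sets, so there is a bijection $\psi:Y\to Y'$ with $\psi(\sigma y) = \sigma\psi(y)$ for all $\sigma\in S_n$; restricting $\sigma$ to $H$ shows $\psi$ is $H$-equivariant, hence maps $H$-orbits on $Y$ bijectively to $H$-orbits on $Y'$, so the counts agree. Chaining the three reductions proves that $v$ depends only on the types of $X$ and $Y$.

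I expect the only genuine obstacle to be the conjugacy claim for equal-type stabilizers; everything else is the bookkeeping that translation and equivariant bijections preserve orbit counts. As a cross-check that also makes the combinatorial nature of $v$ explicit, Burnside's Lemma gives $v = \frac{1}{|H|}\sum_{h\in H}|\fix(h)\cap Y|$, in which $|H|$ and the multiset of cycle types occurring in $H$ depend only on the type of $X$, while $h\mapsto|\fix(h)\cap Y|$ is the permutation character of $Y$, a class function determined by the type of $Y$; thus the whole expression is manifestly a function of the two types alone, which is an alternative route to the same conclusion.
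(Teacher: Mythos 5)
Your proof is correct, and it follows the same three-fold reduction as the paper (vary $x$ within $X$; replace $X$ by an orbit of the same type; replace $Y$ by an orbit of the same type), with the third step identical to the paper's. Where you differ is in the mechanism for the first two steps. The paper handles independence of $x$ by Burnside's Lemma: it writes the orbit count as $\frac{1}{|\stab(x)|}\sum_{\eta\in\stab(x)}|\fix(\sigma\eta\sigma^{-1})|$ and uses $\fix(\sigma\eta\sigma^{-1})=\sigma\fix(\eta)$; and it handles the change of $X$ by observing that an $S_n$-equivariant bijection $f:X_1\to X_2$ forces $\stab(x_1)=\stab(f(x_1))$ \emph{exactly} (not merely up to conjugacy). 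You instead prove one unifying lemma --- conjugate subgroups have equal orbit counts on any $S_n$-invariant set, via the explicit translation bijection $y\mapsto gy$ --- and feed both steps into it, the second via the conjugacy of Young subgroups attached to set partitions with equal block sizes. Your route is more elementary (no counting lemma needed in the main line of argument) and makes the conjugacy structure explicit, which the paper leaves implicit; the paper's route avoids your one nontrivial combinatorial claim (conjugacy of equal-type stabilizers) by recycling its orbit-equivalence theorem, and its use of Burnside foreshadows the character-theoretic computations that follow (where $v_{ij}$ becomes $\langle\chi_i,\chi_j\rangle$). Note that the Burnside ``cross-check'' you append at the end is, in essence, the paper's actual proof of the first step, so you have in effect given both arguments.
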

\begin{proof}
    First, we prove that $v$ is independent of the choice of $x$:

    Suppose $x$ and $\tilde{x}=\sigma(x)$ are two elements in $X$. Then, 
    \[
    \eta \in \stab(\tilde{x}) \iff \eta(\tilde{x})=\tilde{x} \iff \eta\sigma(x)=\sigma(x) \iff \sigma^{-1}\eta\sigma\in \stab(x).
    \]
    Hence, we have $\stab(\tilde{x})=\sigma \stab(x)\sigma^{-1}$.

    Combined with Theorem~\ref{theorem Burnside}, the number of orbits of $Y$ under the action of $\stab(\tilde{x})$ is
    \[
    \frac{1}{|\stab(\tilde{x})|}\sum_{\eta\in \stab(\tilde{x})} |\fix(\eta)|=\frac{1}{|\stab(x)|}\sum_{\eta\in \stab(x)} |\fix(\sigma\eta\sigma^{-1})|
    \]
    It's easy to verify that $\fix(\sigma\eta\sigma^{-1})=\sigma\fix(\eta),\ \forall \eta,\sigma\in S_n$, and since $\sigma$ is a bijection on $Y$, $|\fix(\sigma\eta\sigma^{-1})|=|\fix(\eta)|$.

    According to Theorem~\ref{theorem Burnside}, the above expression is also the number of orbits of $Y$ under the action of $\stab(x)$. Therefore, the number of orbits is independent of the choice of $x$.

    Now, let $X_1$ and $X_2$ be two equivalent orbits. Let $f:X_1\to X_2$ be a bijection. Suppose $x_1$ and $x_2=f(x_1)$ are elements in $X_1$ and $X_2$, respectively. Then,
    \[
    \begin{aligned}
        \sigma\in \stab(x_2) 
        &\iff \sigma(x_2)=x_2 
        \iff\sigma(f(x_1))=f(x_1) 
        \iff f(\sigma x_1)=f(x_1)\\
        &\iff \sigma x_1=x_1 
        \iff \sigma\in \stab(x_1)
    \end{aligned}
    \]
    Hence, $\stab(x_1)=\stab(x_2)$. Under the same group action, the number of orbits must be the same.

    Additionally, suppose $Y_1$ and $Y_2$ are equivalent under the action of $S_n$. Then, they are also equivalent under the action of any subgroup of $S_n$. Thus, the number of orbits remains the same.
\end{proof}

According to the above theorem, we can define $v_{ij}$ as the number of orbits generated by the stabilizer of an element in orbit class $i$ acting on orbit class $j$. Finally, we obtain the following theorem.

\begin{theorem}\label{theorem VX=r}
    The assumption is the same as Theorem~\ref{theorem number of orbit equal}. Set the orbit vector of the nodes set is $(X_1,X_2,\ldots,X_c)$, then the following linear system holds:
    \[
    \begin{bmatrix}
        v_{11} & \cdots & v_{1c}\\
        \vdots & \ddots & \vdots\\
        v_{c1} & \cdots & v_{cc}
    \end{bmatrix}
    \begin{bmatrix}
        X_1\\
        \vdots\\
        X_c
    \end{bmatrix}
    =
    \begin{bmatrix}
        r_1\\
        \vdots\\
        r_c
    \end{bmatrix}
    \]
    where $r_i$ is the number of orbits of basis functions under the action of the stabilizer of some element in orbit class $i$.
\end{theorem}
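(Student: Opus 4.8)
The plan is to apply Theorem~\ref{theorem number of orbit equal} repeatedly, once for each of the $c$ orbit classes in $\RR^n$, invoking the remark following it that permits replacing $S_n$ by any of its subgroups. First I would fix an orbit class $i$ with $1 \leq i \leq c$, choose a representative element $x_i \in \RR^n$ lying in an orbit of that class, and set $H_i := \stab(x_i)$. Since the basis $\{f_1, f_2, \ldots, f_N\}$ and the nodes set $\{a_1, a_2, \ldots, a_N\}$ are symmetric under $S_n$, they are a fortiori symmetric under the subgroup $H_i$, and the interpolation problem remains the same and hence unisolvent. Thus the hypotheses of Theorem~\ref{theorem number of orbit equal} are met for the group $H_i$, and I obtain that the number of $H_i$-orbits of the basis functions equals the number of $H_i$-orbits of the nodes set.

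Next I would identify the two sides of this equality with the $i$-th row of the claimed linear system. The left-hand side, the number of $H_i$-orbits of the basis functions, is precisely $r_i$ by its definition. For the right-hand side, I would decompose the nodes set into its $S_n$-orbits: by the definition of the orbit vector it contains exactly $X_j$ orbits of class $j$ for each $1 \leq j \leq c$. Restricting the action from $S_n$ to $H_i$ cannot merge points belonging to distinct $S_n$-orbits, so the $H_i$-orbits of the whole nodes set are obtained by independently decomposing each constituent $S_n$-orbit. By the preceding theorem on orbit classes, every orbit of class $j$ splits into exactly $v_{ij}$ orbits under the action of $H_i = \stab(x_i)$, and this count is independent both of the chosen representative $x_i$ and of the particular orbit of class $j$. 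Summing over all orbits present in the nodes set therefore yields $\sum_{j=1}^c v_{ij} X_j$ as the total number of $H_i$-orbits.

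Equating the two counts gives $r_i = \sum_{j=1}^c v_{ij} X_j$ for this fixed $i$, and letting $i$ range over $1, \ldots, c$ assembles precisely the matrix equation $VX = r$. The only genuine subtlety, which I would treat with care, is the additivity of the orbit count used above: I must confirm that restricting to $H_i$ never fuses points from different $S_n$-orbits (immediate, as $H_i \subseteq S_n$), so that the total number of $H_i$-orbits is the sum of the contributions over the constituent orbits, with each class-$j$ orbit contributing the same number $v_{ij}$ guaranteed by the well-definedness established in the preceding theorem. Beyond this bookkeeping, the statement is a direct consequence of the results already in hand, so I expect no further obstacle.
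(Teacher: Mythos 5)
Your proposal is correct and follows essentially the same route as the paper: replace $S_n$ by the stabilizer $H_i$ of an element in orbit class $i$ via the remark after Theorem~\ref{theorem number of orbit equal}, identify the number of $H_i$-orbits of the basis functions with $r_i$, and count the $H_i$-orbits of the nodes set as $\sum_{j} v_{ij}X_j$ by splitting each class-$j$ $S_n$-orbit into $v_{ij}$ pieces. The paper states this in three lines; your version merely makes explicit the bookkeeping (hypothesis verification for the subgroup, non-merging of distinct $S_n$-orbits, well-definedness of $v_{ij}$) that the paper leaves implicit.
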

\begin{proof}
    In Theorem~\ref{theorem number of orbit equal}, we replace $S_n$ with the stabilizer of some element in orbit class $i$. Then we have the number of orbits of nodes set is $v_{i1}X_1+v_{i2}X_2+\ldots+v_{ic}X_c$ which is equal to that of basis functions $r_i$ by definition. It is the $i$-th linear equation.
\end{proof}
\begin{remark}
    There is a similar conclusion in \cite{mulder2023unisolvence}, but lacks a strict proof and sufficient reasons.
\end{remark}

If the matrix $V=(v_{ij})_{c\times c}$ in the above theorem is invertible, then multiplying both sides by $V^{-1}$ yields the desired result: given a set of basis vectors, the symmetry of the interpolation nodes set is unique. We will prove this fact in the following section.

For example, when $n=3$, the types of the three orbits are $(0,0,1)>(1,1,0)>(3,0,0)$. Based on this, the calculated matrix $V$ is
\[
\begin{bmatrix}
    1 & 1 & 1\\
    1 & 2 & 3\\
    1 & 3 & 6
\end{bmatrix}
\]

Calculating the determinant $\det(V)=1$, so the matrix is invertible.

\section{Symmetry and reversibility of matrix V}\label{section V is positive definite}

\subsection{$V$ is symmetric}

Let $v_{ij}$ represent the number of orbits obtained by the action of the stabilizer $H_i$ of an element in orbit class $i$ on the orbit class $j$. Let $H_i$ and $H_j$ be the stabilizer of elements in orbit classes $i$ and $j$, respectively. According to the knowledge of group actions, we know that the action of $S_n$ on the left cosets $S_n/H_j$ is transitive and belongs to the orbit class $j$. Therefore, we obtain another representation of $v_{ij}$: $v_{ij}$ represents the number of orbits obtained by the action of $H_i$ on all left cosets $S_n/H_j$. The following theorem reveals the symmetry of $v_{ij}$.

\begin{theorem}\label{theorem V is gram matrix}
    Let $H_i$ be the stabilizer of an element in orbit class $i$. Let $\chi_i$ denote the character of the permutation representation obtained by the action of $S_n$ on the left cosets $S_n/H_i$. Then, $v_{ij} = \langle \chi_i, \chi_j \rangle$.
\end{theorem}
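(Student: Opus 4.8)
The plan is to compute $v_{ij}$ via Burnside's Lemma and then recognize the resulting average as an inner product that Frobenius reciprocity converts into $\langle \chi_i, \chi_j\rangle$. First I would use the second description of $v_{ij}$ given just before the statement: $v_{ij}$ is the number of orbits of $H_i$ acting on the left cosets $S_n/H_j$. Applying Burnside's Lemma (Lemma~\ref{theorem Burnside}) to the finite group $H_i$ acting on the finite set $S_n/H_j$ gives
\[
v_{ij} = \frac{1}{|H_i|}\sum_{h \in H_i}|\fix(h)|,
\]
where $\fix(h)$ is the set of cosets $gH_j$ fixed by $h$.

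Next I would identify this fixed-point count with a character value. By construction, $\chi_j$ is the character of the permutation representation of $S_n$ on $S_n/H_j$, so for any $g \in S_n$ the value $\chi_j(g)$ equals the number of cosets fixed by $g$ — the standard fact that the trace of a permutation matrix counts fixed points. In particular, for $h \in H_i \subseteq S_n$ we have $|\fix(h)| = \chi_j(h) = (\chi_j)_{H_i}(h)$, the restriction of $\chi_j$ to $H_i$. Substituting,
\[
v_{ij} = \frac{1}{|H_i|}\sum_{h \in H_i}(\chi_j)_{H_i}(h) = \langle (\chi_j)_{H_i}, \mathbf{1}_{H_i}\rangle_{H_i},
\]
where $\mathbf{1}_{H_i}$ is the constant function $1$, i.e. the character of the trivial representation of $H_i$ (here I use that $\mathbf{1}_{H_i}$ is real, so the conjugation in the inner product is harmless).

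The crucial step is to realize $\chi_i$ as an induced character so that Frobenius reciprocity applies. I would check that the permutation representation $\phi_i$ of $S_n$ on $V = \spn_\CC\{e_{gH_i}\}$ is induced, in the sense of Definition~\ref{def induce representation}, by the trivial representation of $H_i$: taking $W = \spn_\CC\{e_{H_i}\}$, one has $\phi_i(h)e_{H_i} = e_{hH_i} = e_{H_i}$ for all $h \in H_i$, so $\phi_i(h)W = W$ and $\phi_i(h)|_W$ is the identity, the trivial representation; moreover $\phi_i(\sigma)e_{H_i} = e_{\sigma H_i}$ shows $V = \bigoplus_{\sigma \in S_n/H_i}\phi_i(\sigma)W$. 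Hence $\chi_i$ is precisely the character induced from $\mathbf{1}_{H_i}$. Applying the Frobenius reciprocity formula (Theorem~\ref{theorem frobenius}) with $f = \chi_j$ and $\chi_\theta = \mathbf{1}_{H_i}$ then yields
\[
v_{ij} = \langle (\chi_j)_{H_i}, \mathbf{1}_{H_i}\rangle_{H_i} = \langle \chi_j, \chi_i\rangle_{S_n}.
\]
Finally, since permutation characters are integer-valued and hence real, the Hermitian inner product is symmetric on them, so $\langle \chi_j, \chi_i\rangle = \langle \chi_i, \chi_j\rangle$, completing the proof.

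I expect the main obstacle to be the bookkeeping in the induced-representation step: one must verify the three requirements of Definition~\ref{def induce representation} exactly and correctly match the roles of $f$ and $\theta$ in the Frobenius formula, being careful that it is the \emph{trivial} character of $H_i$ (not of $H_j$) that induces $\chi_i$, while the class function being restricted is $\chi_j$. Everything else is a direct application of Burnside's Lemma together with the fixed-point interpretation of permutation characters.
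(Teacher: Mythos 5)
Your proof is correct, and it takes a genuinely different route from the paper's. Both arguments open the same way: Burnside's Lemma applied to the action of $H_i$ on $S_n/H_j$ gives $v_{ij} = \frac{1}{|H_i|}\sum_{h\in H_i}|\fix(h)|$, and the fixed-point count is then read as a character value. The two proofs diverge in which induced representation is fed into Frobenius reciprocity. The paper forms the permutation representation $\rho$ of $S_n$ on the product $(S_n/H_i)\times(S_n/H_j)$, proves in Lemma~\ref{lemma indece representation} that $\rho$ is induced by the $H_i$-representation on $\spn\{H_i\}\otimes V_j$, applies Frobenius with the class function $f=\mathbf{1}$, and finishes with the product formula $\chi_\rho = \chi_i\cdot\chi_j$ (cited from Serre). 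You instead use the textbook fact that the coset representation itself is induced from the trivial representation --- that is, $\chi_i$ is the character induced from $\mathbf{1}_{H_i}$, which you verify correctly against Definition~\ref{def induce representation} --- then identify $|\fix(h)|$ with the restricted character $(\chi_j)_{H_i}(h)$ and apply Frobenius with $f=\chi_j$. Your route is shorter and more elementary: it bypasses the tensor-product construction and the product-of-characters formula altogether, and every step you need is already stated in the paper's preliminaries. What the paper's longer path buys is Lemma~\ref{lemma indece representation} itself, which the authors single out in their conclusion as a structure worth further study; as a proof of this particular theorem, yours is the leaner argument. Your closing step, replacing $\langle\chi_j,\chi_i\rangle$ by $\langle\chi_i,\chi_j\rangle$, is justified exactly as you say: permutation characters are integer-valued, hence real, so the Hermitian inner product is symmetric on them.
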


To prove Theorem~\ref{theorem V is gram matrix}, we need some preliminaries.

Consider two subgroups $H_i$ and $H_j$ of the permutation group $S_n$. $S_n$ has a canonical action on the set of left cosets $S_n/H_i$: for $\sigma \in S_n$ and $\eta H_i$ a coset, define $\sigma (\eta H_i) := (\sigma \eta)H_i$. Similarly, we can define the action of $S_n$ on $S_n/H_j$. Furthermore, we can consider the action of $S_n$ on the Cartesian product $(S_n/H_i) \times (S_n/H_j)$, defined by acting separately on each component.

$H_i$ naturally acts on the subset $\{H_i\} \times (S_n/H_j)$ of the Cartesian product $(S_n/H_i) \times (S_n/H_j)$. The following lemma reveals the relationship between these two actions.

\begin{lemma}\label{lemma indece representation}
    Let $H_i$ and $H_j$ be two subgroups of $S_n$. Suppose $V_i$ and $V_j$ are free linear spaces generated by the left coset decompositions of $S_n/H_i$ and $S_n/H_j$, respectively. Let $\rho: G \to GL(V_i \otimes V_j)$ and $\theta: H_i \to GL(\spn\{H_i\} \otimes V_j)$ be the permutation representations as described above.

    Then, the representation $\rho$ is the induced representation of $\theta$.
\end{lemma}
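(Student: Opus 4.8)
The plan is to verify directly the defining property of an induced representation from Definition~\ref{def induce representation}: I must exhibit the subspace $W := \spn\{e_{H_i}\} \otimes V_j$ as an $H_i$-stable subspace whose $\rho$-translates by the coset representatives of $H_i$ decompose $V_i \otimes V_j$ as a direct sum. Throughout I write $\{e_{\sigma H_i}\}$ and $\{e_{\eta H_j}\}$ for the standard bases of $V_i$ and $V_j$ indexed by the left cosets, so that $\{e_{\sigma H_i} \otimes e_{\eta H_j}\}$ is the standard basis of $V_i \otimes V_j$ and $\rho(g)\bigl(e_{\sigma H_i}\otimes e_{\eta H_j}\bigr) = e_{g\sigma H_i}\otimes e_{g\eta H_j}$.

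First I would check that $W$ is $H_i$-invariant and that $\theta = \rho|_W$ in the required sense. Since $hH_i = H_i$ for every $h \in H_i$, one has $\rho(h)\bigl(e_{H_i}\otimes e_{\eta H_j}\bigr) = e_{H_i}\otimes e_{h\eta H_j} \in W$, so $W$ is stable under $\rho(h)$ and the restriction $\theta(h) = \rho(h)|_W$ is exactly the permutation representation of $H_i$ on $\{H_i\}\times (S_n/H_j)$ described just before the lemma.

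The key computation is the description of $\rho(\sigma)W$ for a coset representative $\sigma$ of $H_i$. Applying $\rho(\sigma)$ to the spanning set of $W$ produces the vectors $e_{\sigma H_i}\otimes e_{\sigma\eta H_j}$ as $\eta H_j$ ranges over $S_n/H_j$. Because left multiplication by $\sigma$ is a bijection of the coset set $S_n/H_j$, the indices $\sigma\eta H_j$ sweep out all of $S_n/H_j$, whence
\[
\rho(\sigma)W = \spn\{e_{\sigma H_i}\otimes e_{\mu H_j} : \mu H_j \in S_n/H_j\};
\]
that is, $\rho(\sigma)W$ is spanned by precisely those standard basis vectors of $V_i\otimes V_j$ whose first tensor factor is $e_{\sigma H_i}$. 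In particular this subspace depends only on the coset $\sigma H_i$, consistent with the remark following Definition~\ref{def induce representation}.

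Finally I would assemble the direct sum. The standard basis of $V_i\otimes V_j$ partitions according to its first tensor factor $e_{\tau H_i}$, with $\tau H_i$ ranging over $S_n/H_i$, and the block attached to $\tau H_i$ is exactly $\rho(\sigma)W$ for the representative with $\sigma H_i = \tau H_i$. Since distinct cosets yield disjoint blocks whose union is the whole basis, this gives
\[
V_i\otimes V_j = \bigoplus_{\sigma\in S_n/H_i}\rho(\sigma)W,
\]
which is precisely the decomposition required, so $\rho$ is induced by $\theta$. The argument is essentially bookkeeping, and the only step needing genuine care is the computation of $\rho(\sigma)W$: one must use that $\sigma$ permutes the cosets $S_n/H_j$ so that the second tensor factors exhaust $V_j$ rather than landing in a proper subspace. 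Once that observation is secured, the partition of the standard basis makes the direct decomposition immediate, so I expect no substantive obstacle beyond this point.
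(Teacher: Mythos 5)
Your proposal is correct and follows essentially the same route as the paper's proof: both verify that $W = \spn\{e_{H_i}\}\otimes V_j$ is $H_i$-stable and then decompose $V_i\otimes V_j$ into the translates $\rho(\sigma)W$ indexed by the cosets $S_n/H_i$. The only difference is presentational — the paper invokes the distributive law of tensor products over the direct sum $V_i=\bigoplus_r \spn\{e_{rH_i}\}$, while you carry out the equivalent basis-level computation showing $\rho(\sigma)W = \spn\{e_{\sigma H_i}\otimes e_{\mu H_j}\}$ explicitly, which makes implicit use of the same fact that left multiplication by $\sigma$ permutes $S_n/H_j$.
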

\begin{proof}
    Firstly, we need to check that $\spn\{H_i\}\otimes V_j$ is an invariant subspace under $H_i$. To do this, note that $hH_i=H_i$ for all $h\in H_i$. Therefore, for any vector $aH_i\otimes v_j \in \spn\{H_i\}\otimes V_j$ and any $h\in H_i$, we have $h(aH_i\otimes v_j) = h(aH_i)\otimes hv_j = aH_i\otimes hv_j \in \spn\{H_i\}\otimes V_j$.

    Secondly, let $R$ be a set of representatives chosen from each left coset of $H_i$. Then, we have $S_n/H_i=\{rH_i:r\in R\}$. Furthermore, we obtain the direct sum decomposition: $V_i=\bigoplus_{r\in R}\spn\{rH_i\}$, since $V_i$ is a free linear space generated by $S_n/H_i$. According to the distributive law, we have the following decomposition:
    \[
    V_i\otimes V_j = \bigoplus_{r\in R}(\spn \{rH_i\}\otimes V_j) = \bigoplus_{r\in R}\left(\rho(r)\left(\spn \{H_i\}\otimes V_j\right)\right),
    \]
    which completes the proof due to Definition~\ref{def induce representation}.
\end{proof}

Next we give a proof of Theorem~\ref{theorem V is gram matrix}.
\begin{proof}
    Since $H_i$ acts on $S_n/H_j$, according to Theorem~\ref{theorem Burnside}, we have
    \[
    v_{ij}=\frac{1}{|H_i|}\sum_{\sigma\in H_i}|\fix(\sigma)|.
    \]
    Note that the $\theta$ defined in Lemma~\ref{lemma indece representation} is actually the permutation representation of the above action. Therefore, the number of fixed points of $\sigma$ is precisely the character $\chi_\theta(\sigma)$. According to Lemma~\ref{lemma indece representation} and Theorem~\ref{theorem frobenius}, we have
    \[
    v_{ij} = \frac{1}{|H_i|}\sum_{\sigma\in H_i} \chi_\theta(\sigma) = \langle \chi_\theta, \mathbf{1} \rangle_{H_i} = \langle \chi_\rho, \mathbf{1} \rangle_{S_n},
    \]
    where $\mathbf{1}$ represents the class function that takes the value $1$ for all elements in $S_n$. Moreover, $\rho$ is actually the permutation representation of the product of two actions, and its character is $\chi_\rho = \chi_i \cdot \chi_j$.~\cite{serre1977linear} Therefore,
    \[
    v_{ij} = \langle \chi_\rho, \mathbf{1} \rangle_{S_n} = \frac{1}{|S_n|}\sum_{\sigma\in S_n}\chi_i(\sigma)\chi_j(\sigma)=\langle \chi_i, \chi_j \rangle_{S_n}.
    \]
\end{proof}
\begin{corollary}\label{corollary V is symmetric}
    The matrix $V$ is a Gram matrix, hence symmetric.
\end{corollary}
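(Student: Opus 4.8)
The plan is to read this corollary directly off Theorem~\ref{theorem V is gram matrix}, which has already done essentially all of the work. First I would recall that Theorem~\ref{theorem V is gram matrix} identifies each entry as $v_{ij} = \langle \chi_i, \chi_j \rangle$, where $\chi_1, \chi_2, \ldots, \chi_c$ are the permutation characters attached to the orbit classes and $\langle \cdot, \cdot \rangle$ is the inner product on the Hilbert space $\mathbf{H}$ of class functions. By definition, a matrix whose $(i,j)$ entry is the inner product of the $i$-th and $j$-th members of a fixed family of vectors is precisely the Gram matrix of that family. Hence $V = (\langle \chi_i, \chi_j \rangle)_{c \times c}$ is the Gram matrix of the family $\{\chi_1, \ldots, \chi_c\} \subset \mathbf{H}$, which is the first assertion. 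Note that no linear independence of the $\chi_i$ is needed at this stage, since the Gram matrix is defined for an arbitrary family.

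For the symmetry I would observe that each $\chi_i$ is the character of a \emph{permutation} representation, so that $\chi_i(\sigma) = |\fix(\sigma)|$ is a non-negative integer for every $\sigma$ (this is exactly the fixed-point count already used inside the proof of Theorem~\ref{theorem V is gram matrix}). In particular every $\chi_i$ is real-valued, so the complex conjugation in the definition of $\langle \cdot, \cdot \rangle$ is inert, and
\[
v_{ij} = \langle \chi_i, \chi_j \rangle = \frac{1}{|S_n|} \sum_{\sigma \in S_n} \chi_i(\sigma)\chi_j(\sigma) = \langle \chi_j, \chi_i \rangle = v_{ji}.
\]
Thus $V = V^{T}$, which completes the proof.

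There is no genuine obstacle here: once $v_{ij} = \langle \chi_i, \chi_j \rangle$ is in hand from Theorem~\ref{theorem V is gram matrix}, the corollary is immediate. The only point requiring a word of care is that a Gram matrix over $\CC$ is a priori only Hermitian; the honest symmetry $V = V^{T}$ (rather than merely $V = \overline{V}^{T}$) uses the extra fact that permutation characters take real values. Positive-definiteness is not required for this corollary, although the Gram-matrix structure exhibited here is presumably what will later be exploited to establish the invertibility of $V$.
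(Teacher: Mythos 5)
Your proposal is correct and matches the paper's reasoning exactly: the paper states this corollary without further proof precisely because, as you observe, it is immediate from Theorem~\ref{theorem V is gram matrix}, whose final display $v_{ij} = \frac{1}{|S_n|}\sum_{\sigma\in S_n}\chi_i(\sigma)\chi_j(\sigma)$ is already manifestly symmetric in $i$ and $j$. Your added remark that real-valuedness of the permutation characters upgrades Hermitian to genuinely symmetric is a careful touch the paper leaves implicit, but it is the same argument.
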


\subsection{$V$ is invertible}

Since the matrix $V$ is a Gram matrix, its invertibility is equivalent to the linear independence of $\chi_i$, which further leads us to consider the space of class functions on $S_n$. Class functions are those taking constant values on the conjugate classes of $S_n$.

Any permutation in $S_n$ can be decomposed into a product of disjoint cycles, and this decomposition is unique up to order. For a permutation $\sigma \in S_n$, we denote by $c_i$ the number of cycles of length $i$ in the cycle decomposition of $\sigma$. The tuple $(c_1, c_2, \dots, c_n)$ is called the type of $\sigma$, and two permutations are conjugate if and only if they have the same type \cite{rotman2012introduction}. Thus, there is a one-to-one correspondence between the conjugate classes in $S_n$ and the non-negative integer solutions of the Diophantine equation $c_1 + 2c_2 + \dots + nc_n = n$.

We denote the orbit classes in $S_n$, ordered from largest to smallest as defined in Section~\ref{subsection orbit class}, by $A_1, A_2, \dots, A_c$. This correspondence precisely matches the orbit classes in $\RR^n$, which helps us prove the following theorem.

\begin{theorem}\label{theorem V is invertible}
    The class functions $\chi_i$, for $1 \leq i \leq c$, defined in Theorem~\ref{theorem V is gram matrix}, are linearly independent. Therefore, the Gram matrix $V$ is invertible; moreover, it is positive definite.
\end{theorem}
\begin{proof}
    Let $\chi_i$ take values $K_{ij}$ on the conjugate class $A_j$. Notice that the number of $\chi_i$'s is the same as the number of conjugate classes $A_j$. Therefore, we only need to show that $K=(K_{ij})$ is a non-singular matrix.

    Let $\chi_i$ and $A_j$ have types $(c_1, c_2, \dots, c_n)$ and $(\tilde{c}_1, \tilde{c}_2, \dots, \tilde{c}_n)$, respectively. Suppose $i<j$, then there exists $1\leq k\leq n$ such that $c_k<\tilde{c}_k$ and $c_l=\tilde{c}_l$ for all $l>k$. By previous definitions, $\chi_i$ represents the character of the permutation representation of $S_n$ acting on the orbit class with type $(c_1, c_2, \dots, c_n)$, and hence the character of a permutation representation is the number of fixed points. Thus, for a permutation $\sigma$ in the conjugate class $A_j$, $K_{ij}$ represents the number of fixed points under the action of $\sigma$ on that orbit class. We claim that under the action of $\sigma$, this orbit class has no fixed point.

    Indeed, suppose $x=(x_1, x_2, \dots, x_n)$ is a fixed point, partitioned into equivalence classes according to equality relations, with $c_i$ $i$-element equivalence classes. Since $\sigma$ fixes $x$, $i$-cycles in $\sigma$ must act on equivalence classes with at least $i$ equal elements. According to the assumption, when $l>k$, the number of $l$-element equivalence classes is equal to the number of $l$-cycles. Therefore, $l$-cycles exhaust all $l$-element equivalence classes. Since $\tilde{c}_k$ exceeds the number of $k$-element equivalence classes, at least one $k$-cycle will cause changes in the components, contradicting $x$ being a fixed point. Hence, the assumption is false, and $K_{ij}=0$ when $j>i$.

    For $i=j$, the proof is similar to the above. The $i$-cycles of permutation $\sigma$ and the number of $i$-element equivalence classes of $x$ are equal. This implies that at least one $x$ is a fixed point of $\sigma$, so $K_{ii}\geq 1$. Thus, $K$ is a lower triangular matrix with non-zero diagonal entries, making it non-singular.
\end{proof}
\begin{remark}
    The above Theorem and Corollary~\ref{corollary V is symmetric} prove the conjucture $1$ of \cite{mulder2023unisolvence}.
\end{remark}

\section{Conclusion and future work}\label{section conclusion}
\subsection{Conclusion}
\begin{theorem}\label{theorem conclusion 1}
    Let the interpolation space $F$ have symmetric basis functions $f_i, 1\leq i\leq N$. Let $\{a_i: 1\leq i\leq N\}$ and $\{\tilde{a}_i: 1\leq i\leq N\}$ be symmetric subsets of $\RR^n$, serving as interpolation nodes, both unisolvent. Then, the actions of $S_n$ on $\{a_i: 1\leq i\leq N\}$ and $\{\tilde{a}_i: 1\leq i\leq N\}$ are equivalent.
\end{theorem}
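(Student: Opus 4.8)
The plan is to reduce the statement to the invertibility of the matrix $V$, which has already been established, so that the theorem becomes an assembly of the preceding results. First I would observe that the hypotheses of Theorem~\ref{theorem VX=r} hold for each of the two node sets separately: $F$ has a symmetric basis $\{f_1,\ldots,f_N\}$, each of $\{a_i\}$ and $\{\tilde a_i\}$ is a symmetric subset of $\RR^n$, and each interpolation is unisolvent by assumption. Writing $(X_1,\ldots,X_c)$ and $(\tilde X_1,\ldots,\tilde X_c)$ for the orbit vectors of $\{a_i\}$ and $\{\tilde a_i\}$ respectively, Theorem~\ref{theorem VX=r} then yields two linear systems $VX = r$ and $V\tilde X = r$.

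The crucial point to emphasize is that the right-hand side $r$ is the same in both systems. Indeed, by its definition $r_i$ counts the orbits of the basis functions $\{f_1,\ldots,f_N\}$ under the stabilizer of an element in orbit class $i$; this quantity depends only on the interpolation space $F$ together with its chosen symmetric basis, and not at all on the node set. Likewise the coefficient matrix $V=(v_{ij})$ is a purely combinatorial invariant of the action of $S_n$ on $\RR^n$. Hence both orbit vectors solve the identical system, with the same $V$ and the same $r$.

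From here I would subtract the two systems to obtain $V(X-\tilde X)=0$ and invoke Theorem~\ref{theorem V is invertible}, which guarantees that $V$ is invertible (indeed positive definite). This forces $X=\tilde X$, i.e. the two node sets have identical orbit vectors. Finally, Theorem~\ref{theorem condition for equivalent symmetric subsets} converts equality of orbit vectors into equivalence of the two symmetric subsets under the action of $S_n$, which is exactly the desired conclusion.

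The main obstacle in this argument is not in the final assembly but has already been surmounted earlier, namely establishing that $V$ is invertible (Theorem~\ref{theorem V is invertible}), which rests on recognizing $V$ as the Gram matrix of the characters $\chi_i$ (Theorem~\ref{theorem V is gram matrix}) and proving their linear independence via the triangular character table. Given that machinery, the present theorem is a short corollary; the only subtlety worth stating explicitly is the node-independence of $r$, since that is precisely what lets the same linear system govern both node sets and thereby pin down their common orbit vector.
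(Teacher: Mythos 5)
Your proposal is correct and follows exactly the paper's own argument: apply Theorem~\ref{theorem VX=r} to both node sets (the key observation that $r$ depends only on the basis functions is precisely what underlies the paper's equation $VX=V\tilde{X}$), invoke the invertibility of $V$ from Theorem~\ref{theorem V is invertible} to conclude $X=\tilde{X}$, and finish with Theorem~\ref{theorem condition for equivalent symmetric subsets}. No differences worth noting.
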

\begin{proof}
    Let the orbit vectors of the interpolation nodes set be $X=(X_1,X_2,\ldots,X_c)^T$ and $\tilde{X}=(\tilde{X}_1,\tilde{X}_2,\ldots,\tilde{X}_c)^T$. According to Theorem~\ref{theorem VX=r}, we have $VX=V\tilde{X}$, and according to Theorem~\ref{theorem V is invertible}, the matrix $V$ is invertible. Therefore, $X=\tilde{X}$. By Theorem~\ref{theorem condition for equivalent symmetric subsets}, we complete the proof.

\end{proof}
In the above theorem, if the orbit class of the basis functions under the action of $S_n$ is a subset of the orbit classes in $\RR^n$, then we can further obtain the following theorem.
\begin{theorem}\label{theorem conclusion 2}
    Let the interpolation space $F$ have symmetric basis functions $f_i, 1\leq i\leq N$, and let $\{a_i:1\leq i\leq N\}$ be a symmetric subset of $\RR^n$, serving as interpolation nodes, where the interpolation problem is unisolvent. Suppose further that the orbit class of the basis functions is a subset of the orbit classes in $\RR^n$. Then, the action of $S_n$ on the basis functions and interpolation nodes is equivalent.
\end{theorem}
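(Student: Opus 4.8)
The plan is to leverage Theorem~\ref{theorem conclusion 1} together with the orbit-counting machinery already assembled, exploiting the extra hypothesis that the orbit classes of the basis functions form a \emph{subset} of the orbit classes in $\RR^n$. The key observation is that the basis functions $\{f_i\}$ carry an $S_n$-action via $\sigma f = f\circ\sigma^{-1}$, and under the new hypothesis the decomposition of $\{f_i\}$ into $S_n$-orbits corresponds, orbit-class by orbit-class, to genuine orbit classes of points in $\RR^n$. Thus we can assign to the basis functions an orbit vector $(\Phi_1,\Phi_2,\ldots,\Phi_c)\in\NN^c$ in exactly the same way as we did for a symmetric subset of $\RR^n$, where $\Phi_i$ counts how many orbits of basis functions fall into orbit class $i$.

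First I would make precise that the quantity $r_i$ appearing in Theorem~\ref{theorem VX=r} is itself computed by the same stabilizer-action count applied to the basis functions. Since the orbit classes of the basis functions are genuine orbit classes of $\RR^n$, the number $r_i$ of orbits of basis functions under the stabilizer $H_i$ is precisely $v_{i1}\Phi_1+v_{i2}\Phi_2+\cdots+v_{ic}\Phi_c$, by the same Burnside-type argument that produced the matrix $V$ (indeed $v_{ij}$ was defined purely in terms of how $H_i$ decomposes orbit class $j$, and this is insensitive to whether orbit class $j$ is populated by points or by functions). Hence the vector $r=(r_1,\ldots,r_c)^T$ equals $V\Phi$, and Theorem~\ref{theorem VX=r} reads $VX = r = V\Phi$.

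Next, invoking the invertibility of $V$ from Theorem~\ref{theorem V is invertible}, I multiply by $V^{-1}$ to conclude $X=\Phi$; that is, the orbit vector of the interpolation nodes set coincides with the orbit vector of the basis functions. At this point the nodes and the basis functions are two symmetric systems with identical orbit vectors, and the same orbit-class-matching bijection used in the proof of Theorem~\ref{theorem condition for equivalent symmetric subsets} produces an $S_n$-equivariant bijection between them: for each orbit class, the equal counts $X_i=\Phi_i$ let us pair up orbits of nodes with orbits of basis functions, and within each matched pair an equivariant bijection exists because both orbits are equivalent to $S_n/H_i$.

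The main obstacle, and the step requiring the most care, is the very first one: establishing rigorously that the $S_n$-orbits of the basis functions are themselves of the \emph{orbit-class type} defined for $\RR^n$, so that the same $v_{ij}$ govern their stabilizer decompositions. This is exactly where the extra hypothesis is indispensable—without it a symmetric function space need not even admit a symmetric basis whose orbits look like point orbits (cf.\ the example $\spn\{x-y\}$), and the identity $r=V\Phi$ would fail. Once that correspondence is nailed down, the remainder is a direct application of the invertibility of $V$ and of Theorem~\ref{theorem condition for equivalent symmetric subsets}, so I expect no further difficulty.
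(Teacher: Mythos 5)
Your proposal is correct and follows essentially the same route as the paper's own proof: the paper likewise assigns an orbit vector to the basis functions (meaningful precisely because of the extra hypothesis), writes $VX=r=VY$, invokes the invertibility of $V$ from Theorem~\ref{theorem V is invertible} to get $X=Y$, and concludes via Theorem~\ref{theorem condition for equivalent symmetric subsets}. If anything, you are more explicit than the paper on the two points it glosses over --- the justification that $r=V\Phi$ (i.e.\ that the stabilizer-orbit count $v_{ij}$ applies equally to function orbits occupying genuine orbit classes) and the fact that the orbit-matching bijection in Theorem~\ref{theorem condition for equivalent symmetric subsets}, stated for subsets of $\RR^n$, extends verbatim to pairing node orbits with basis-function orbits.
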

\begin{proof}
    Let the orbit vectors of the interpolation nodes and basis functions be $X=(X_1,X_2,...,X_c)^T$ and $Y=(Y_1,Y_2,...,Y_c)^T$, respectively. According to Theorem~\ref{theorem VX=r}, we have $VX=r=VY \implies X=Y$. Then, according to Theorem~\ref{theorem condition for equivalent symmetric subsets}, we complete the proof.
\end{proof}
\begin{remark}
    For certain basis functions, their orbits are not within the aforementioned orbit classes. For example, when $n=3$, the symmetric basis functions $\{x_1^2x_2+x_2^2x_3+x_3^2x_1, x_1^2x_3+x_3^2x_2+x_2^2x_1\}$ are not equivalent to any orbit class. This is because in $\RR^3$, there are only three types of orbit classes, as shown in Example~\ref{example n=3 p=2}, with number of elements $1$, $3$, and $6$, respectively, whereas the elements of this number of elements is $2$. This illustrates the meaningfulness of the conditions in the theorem.
\end{remark}
\begin{corollary}
    Assuming the same conditions as in Theorem~\ref{theorem conclusion 2}, if the basis functions are all monomials such as $x_i^{i_1}x_2^{i_2}...x_n^{i_n}$, then the actions of $S_n$ on the basis functions and interpolation nodes set are equivalent.
\end{corollary}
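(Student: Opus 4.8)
The plan is to verify the single hypothesis of Theorem~\ref{theorem conclusion 2} that is not automatic, namely that the orbit class of the basis functions is a subset of the orbit classes in $\RR^n$, and then invoke that theorem directly. Since the basis functions here are monomials, everything reduces to understanding how $S_n$ permutes monomials and matching each monomial orbit with an orbit of points in $\RR^n$.

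First I would make the action on a monomial explicit. Writing a basis monomial as $m = x_1^{i_1} x_2^{i_2} \cdots x_n^{i_n}$ and using the definition $\sigma m = m \circ \sigma^{-1}$ together with $\sigma(x_1,\ldots,x_n) = (x_{\sigma(1)},\ldots,x_{\sigma(n)})$, a one-line substitution ($j = \sigma^{-1}(k)$) gives $\sigma m = x_1^{i_{\sigma(1)}} x_2^{i_{\sigma(2)}} \cdots x_n^{i_{\sigma(n)}}$. In particular $\sigma m$ is again a monomial, and its exponent vector is obtained from $(i_1,\ldots,i_n)$ by exactly the permutation that $S_n$ applies to the coordinates of a point in $\RR^n$.

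The central step is then to construct an $S_n$-equivariant bijection between an orbit of monomial basis functions and an orbit in $\RR^n$. I would associate to each monomial $m = x_1^{i_1}\cdots x_n^{i_n}$ its exponent vector $\psi(m) := (i_1,\ldots,i_n)\in\NN^n\subset\RR^n$. By the computation above, $\psi(\sigma m) = (i_{\sigma(1)},\ldots,i_{\sigma(n)}) = \sigma\,\psi(m)$, so $\psi$ intertwines the two actions, and restricted to a single orbit of monomials it is a bijection onto the orbit $S_n\,\psi(m)$ in $\RR^n$. Hence every orbit of the monomial basis functions is equivalent, under the action of $S_n$, to an orbit in $\RR^n$; this is precisely the statement that the orbit class of the basis functions is a subset of the orbit classes in $\RR^n$.

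With this hypothesis checked, Theorem~\ref{theorem conclusion 2} applies verbatim and yields the equivalence of the $S_n$-actions on the basis functions and on the interpolation nodes set. I do not expect a serious obstacle: the only point requiring care is the bookkeeping of the inverse in $\sigma m = m \circ \sigma^{-1}$, which is exactly what cancels against the reindexing of the exponents so that $\psi$ comes out genuinely equivariant rather than anti-equivariant. The contrast with the remark preceding the corollary, where a symmetrized function such as $x_1^2 x_2 + x_2^2 x_3 + x_3^2 x_1$ has an orbit of size $2$ that matches no orbit class in $\RR^3$, makes clear that the monomial hypothesis is precisely what forces each basis orbit to coincide with an orbit class in $\RR^n$ and lets the reduction go through.
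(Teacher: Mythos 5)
Your proposal is correct and matches the argument the paper intends: the corollary is stated as an immediate consequence of Theorem~\ref{theorem conclusion 2}, with the implicit justification being exactly your exponent-vector map $\psi(x_1^{i_1}\cdots x_n^{i_n})=(i_1,\ldots,i_n)$, whose $S_n$-equivariance shows each monomial orbit is equivalent to an orbit in $\RR^n$. Your careful check that the inverse in $\sigma m = m\circ\sigma^{-1}$ cancels against the reindexing, making $\psi$ genuinely equivariant, is precisely the bookkeeping the paper leaves to the reader.
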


The above corollary proves the conjecture $2$ of \cite{mulder2023unisolvence}.

\subsection{Future work}

In symmetric high-dimensional interpolation, we have already provided a necessary condition for unisolvence. Inspired by one-dimensional polynomial interpolation, we believe it is possible to construct a condition that is both necessary and sufficient. This may require an in-depth exploration of the permutation representation in Lemma~\ref{lemma indece representation}. We set this as an open problem.

%\printbibliography
\bibliographystyle{abbrv}
\bibliography{arxiv}
\end{document}